\newtheorem{theorem}{Theorem}
\theoremstyle{plain}
\newtheorem{definition}{Definition}
\newtheorem{lemma}{Lemma}
\newtheorem{remark}{Remark}
\numberwithin{equation}{section}
\begin{document}
\title[HERM\.{I}TE-HADAMRD-FEJER TYPE INEQUALITIES FOR FRACTIONAL INTEGRALS]{%
HERMITE-HADAMARD-FEJER TYPE INEQUALITIES FOR $s-$CONVEX \ FUNCTION IN THE
SECOND SENSE VIA FRACTIONAL INTEGRALS}
\author{ERHAN SET$^{\bigstar }$}
\address{$^{\bigstar }$DEPARTMENT OF MATHEMATICS, FACULTY OF ARTS AND
SCIENCES, ORDU UNIVERSITY, 52200, ORDU, TURKEY}
\email{erhanset@yahoo.com}
\author{\.{I}mdat \.{I}scan$^{\blacktriangle }$}
\address{$^{\blacktriangle }$DEPARTMENT OF MATHEMATICS, FACULTY OF ARTS AND
SCIENCES, Giresun UNIVERSITY, Giresun, TURKEY}
\email{imdati@yahoo.com}
\author{Hasan H\"{u}seyin KARA$^{\blacksquare }$}
\address{$^{\blacksquare }$DEPARTMENT OF MATHEMATICS, FACULTY OF ARTS AND
SCIENCES, ORDU UNIVERSITY, 52200, ORDU, TURKEY}
\email{h.huseyin.kara61@gmail.com}
\subjclass[2000]{ 26D07, 26D15.}
\keywords{s-convex Function, Hermite-Hadamard inequality,
Hermite-Hadamard-Fejer inequality, Riemann-Liouville fractional integral.}

\begin{abstract}
In this paper, we established Hermite-Hadamard-Fejer type inequalities for $%
s-$convex functions in the second sense via fractional integrals. The some
results presented here would provide extansions of those given in earlier
works.
\end{abstract}

\maketitle

\section{\protect\bigskip INTRODUCTION}

The following inequality is well known in the literature as the
Hermite-Hadamard integral inequality\cite{HAD}:%
\begin{equation}
f\left( \frac{a+b}{2}\right) \leq \frac{1}{b-a}\int_{a}^{b}f\left( x\right)
dx\leq \frac{f\left( a\right) +f\left( b\right) }{2}  \label{HH}
\end{equation}%
where $f:I\subset 
%TCIMACRO{\U{211d} }%
%BeginExpansion
\mathbb{R}
%EndExpansion
\rightarrow 
%TCIMACRO{\U{211d} }%
%BeginExpansion
\mathbb{R}
%EndExpansion
$ is a convex function on the interval $I$ of real numbers and $a.b\in I$
with $a<b.$ A function $f:\left[ a,b\right] \subset 
%TCIMACRO{\U{211d} }%
%BeginExpansion
\mathbb{R}
%EndExpansion
\rightarrow 
%TCIMACRO{\U{211d} }%
%BeginExpansion
\mathbb{R}
%EndExpansion
$ is said to be convex if whenever $x,y\in \left[ a,b\right] $ and $t\in %
\left[ 0,1\right] $ the following inequality holds%
\begin{equation}
f\left( tx+\left( 1-t\right) y\right) \leq tf\left( x\right) +\left(
1-t\right) f\left( y\right) .  \label{convex tanim}
\end{equation}

In \cite{FEJER}, Fej\'{e}r gave a generalization of the inequalities (\ref%
{HH}) as the following:

If $f:\left[ a,b\right] \rightarrow 
%TCIMACRO{\U{211d} }%
%BeginExpansion
\mathbb{R}
%EndExpansion
$ is a convex function, and $g:\left[ a,b\right] \rightarrow 
%TCIMACRO{\U{211d} }%
%BeginExpansion
\mathbb{R}
%EndExpansion
$ is nonnegative, integrable and symmetric about $\frac{a+b}{2},$ then 
\begin{equation}
f\left( \frac{a+b}{2}\right) \int_{a}^{b}g\left( x\right) dx\leq
\int_{a}^{b}f\left( x\right) g\left( x\right) dx\leq \frac{f\left( a\right)
+f\left( b\right) }{2}\int_{a}^{b}g\left( x\right) dx\text{ }.  \label{FEJ}
\end{equation}

If $g\equiv 1$, then we are talking about the Hermite--Hadamard
inequalities.More about those inequalities can be found in a number of
papers and monographies (for example, see \cite{DRAG}-\cite{e.set}).

In \cite{HUDMA}, Hudzik and Maligrada considered among others the class of
functions which are s-convex in the second sense.

\begin{definition}
\label{d1} A function $f:[0,\infty )\rightarrow 
%TCIMACRO{\U{211d} }%
%BeginExpansion
\mathbb{R}
%EndExpansion
$ is said to be s-convex in the second sense if%
\begin{equation}
f\left( \lambda x+\left( 1-\lambda \right) y\right) \leq \lambda ^{s}f\left(
x\right) +\left( 1-\lambda \right) ^{s}f\left( y\right) .  \label{scon}
\end{equation}%
for all $x,y\in \lbrack 0,\infty ),$ $\lambda \in \left[ 0,1\right] $ and
for some fixed $s\in (0,1].$
\end{definition}

It can be easily seen that $s=1,$ s-convexity reduces to ordinary convexity
of functions defined on $[0,\infty ).$

In \cite{DRAG}, Dragomir and Fitzpatrick proved Hadamard's inequality \
which holds \ for s-convex functions in the second sense.

\begin{theorem}
\label{t1} Suppose that $f:[0,\infty )\rightarrow \lbrack 0,\infty )$ is an
s-convex functions in the second sense, where $s\in \left( 0,1\right) ,$ and
let $a,b\in \lbrack 0,\infty ),$ $a<b.$ If \ $f\in L\left[ a,b\right] ,$
then the following inequalities hold:%
\begin{equation}
2^{s-1}f\left( \frac{a+b}{2}\right) \leq \frac{1}{b-a}\int_{a}^{b}f\left(
x\right) dx\leq \frac{f\left( a\right) +f\left( b\right) }{s+1}  \label{shad}
\end{equation}
\end{theorem}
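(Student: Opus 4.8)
The plan is to prove the two inequalities in (\ref{shad}) separately, in each case combining the defining inequality (\ref{scon}) with the affine substitution $x=ta+(1-t)b$, $t\in[0,1]$, which maps $[a,b]$ onto itself and gives the identity $\frac{1}{b-a}\int_{a}^{b}f(x)\,dx=\int_{0}^{1}f\big(ta+(1-t)b\big)\,dt$.

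For the left-hand inequality, first I would apply (\ref{scon}) with $\lambda=\frac{1}{2}$ to the pair of points $ta+(1-t)b$ and $(1-t)a+tb$, whose midpoint equals $\frac{a+b}{2}$ for every $t$. This yields
\[
f\!\left(\frac{a+b}{2}\right)\leq \frac{1}{2^{s}}\Big[f\big(ta+(1-t)b\big)+f\big((1-t)a+tb\big)\Big].
\]
Integrating in $t$ over $[0,1]$ and noting that both integrals on the right-hand side equal $\frac{1}{b-a}\int_{a}^{b}f(x)\,dx$, I get $f\!\left(\frac{a+b}{2}\right)\leq 2^{1-s}\cdot\frac{1}{b-a}\int_{a}^{b}f(x)\,dx$, and multiplying through by $2^{s-1}$ gives the first inequality.

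For the right-hand inequality, I would apply (\ref{scon}) directly with $\lambda=t$, $x=a$, $y=b$ to obtain $f\big(ta+(1-t)b\big)\leq t^{s}f(a)+(1-t)^{s}f(b)$, and then integrate over $t\in[0,1]$. Since $\int_{0}^{1}t^{s}\,dt=\int_{0}^{1}(1-t)^{s}\,dt=\frac{1}{s+1}$, the right-hand side becomes $\frac{f(a)+f(b)}{s+1}$, while the left-hand side is again $\frac{1}{b-a}\int_{a}^{b}f(x)\,dx$, which completes the proof.

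The computation is short, so there is no serious obstacle; the only point requiring a little care is the left inequality, where one must use the \emph{symmetric} pair of points (rather than a single point of $[a,b]$) so that the midpoint is independent of $t$, and then correctly account for the factor $2^{1-s}$ arising from the $2^{-s}$ in the $\lambda=\frac{1}{2}$ step together with the two copies of the integral. Note that the hypothesis $f\geq 0$ is not actually needed for either step, only $f\in L[a,b]$; it is included because the natural range of an $s$-convex function in this setting is $[0,\infty)$.
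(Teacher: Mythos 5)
Your proof is correct and is essentially the standard argument: the paper does not prove Theorem \ref{t1} itself but quotes it from Dragomir and Fitzpatrick, whose proof runs exactly as yours does --- for the left inequality apply (\ref{scon}) with $\lambda=\frac{1}{2}$ to the symmetric pair $ta+(1-t)b$ and $(1-t)a+tb$ and integrate in $t$, and for the right inequality integrate $f\left(ta+(1-t)b\right)\leq t^{s}f(a)+(1-t)^{s}f(b)$ over $[0,1]$. Your remark that nonnegativity of $f$ is not actually needed, only $f\in L[a,b]$, is also accurate.
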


We give some necessary definitions and \ mathematical preliminaries of
fractional calculus theory which are used throught this paper.

\begin{definition}
\label{d2} Let $f\in \left[ a,b\right] .$ The $Riemann-Liouville$ integrals $%
J_{a+}^{\alpha }f\ and\ J_{b-}^{\alpha }f$ of order $\alpha >0$ with $a\geq
0 $ are defined by integrals hold:%
\begin{equation}
J_{a+}^{\alpha }f(x)=\frac{1}{\Gamma \left( \alpha \right) }%
\int_{a}^{x}\left( x-t\right) ^{\alpha -1}f\left( t\right) dt,\ \ x>a
\label{J integral}
\end{equation}%
and%
\begin{equation*}
J_{b-}^{\alpha }f(x)=\frac{1}{\Gamma \left( \alpha \right) }%
\int_{x}^{b}\left( t-x\right) ^{\alpha -1}f\left( t\right) dt,\ \ x<b
\end{equation*}%
respectively where $\Gamma \left( \alpha \right) =\int_{0}^{\infty
}e^{-t}u^{\alpha -1}du$. Here is $J_{a+}^{0}f(x)=J_{b-}^{0}f(x)=f\left(
x\right) $ \newline
\end{definition}

In the case of $\alpha =1,$ the fractional integral reduces to the classical
integral. The recent results and the properties concerning this operator can
be found (\cite{Anastassiou}-\cite{Dahmani4})

In \cite{sarikaya}, Sar\i kaya \textit{et.al.} represented
Hermite-Hadamard's inequalities in fractional integral forms as follows.

\begin{theorem}
\label{t2} Let $f:\left[ a,b\right] \rightarrow 
%TCIMACRO{\U{211d} }%
%BeginExpansion
\mathbb{R}
%EndExpansion
$ be positive function with $0\leq a<b$ and $f\in L\left[ a,b\right] .$ If $%
f $ is a convex function on $\left[ a,b\right] ,$ than the following
inequalities for fractional integrals hold%
\begin{equation}
f\left( \frac{a+b}{2}\right) \leq \frac{\Gamma \left( \alpha +1\right) }{%
2\left( b-a\right) ^{\alpha }}\left[ J_{a+}^{\alpha }f\left( b\right)
+J_{b-}^{\alpha }f\left( a\right) \right] \leq \frac{f\left( a\right)
+f\left( b\right) }{2}  \label{sarikaya}
\end{equation}%
\newline
with $\alpha >0.$
\end{theorem}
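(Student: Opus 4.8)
The plan is to obtain both halves of \eqref{sarikaya} directly from the convexity inequality \eqref{convex tanim}, by weighting a pointwise estimate with the kernel $t^{\alpha-1}$ and performing an affine change of variables that turns the resulting integrals into Riemann--Liouville integrals. First I would record the elementary fact that for every $t\in[0,1]$ the two points $x=ta+(1-t)b$ and $y=(1-t)a+tb$ lie in $[a,b]$ and have midpoint $\frac{x+y}{2}=\frac{a+b}{2}$. Applying \eqref{convex tanim} to this pair with parameter $\frac12$ gives, for all $t\in[0,1]$,
\begin{equation*}
f\left(\frac{a+b}{2}\right)\le \frac12\Big[f\big(ta+(1-t)b\big)+f\big((1-t)a+tb\big)\Big].
\end{equation*}

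Next I would multiply this pointwise inequality by $t^{\alpha-1}\ge 0$ and integrate in $t$ over $[0,1]$. The left side becomes $\frac{1}{\alpha}f\left(\frac{a+b}{2}\right)$. In the first integral on the right I would substitute $u=ta+(1-t)b$ and in the second $u=(1-t)a+tb$; each is an affine bijection of $[0,1]$ onto $[a,b]$ with $|du|=(b-a)\,dt$, and a short computation rewrites the two terms as
\begin{equation*}
\frac{1}{(b-a)^{\alpha}}\int_a^b (b-u)^{\alpha-1}f(u)\,du
\qquad\text{and}\qquad
\frac{1}{(b-a)^{\alpha}}\int_a^b (u-a)^{\alpha-1}f(u)\,du .
\end{equation*}
By Definition \ref{d2} these equal $\frac{\Gamma(\alpha)}{(b-a)^{\alpha}}J_{a+}^{\alpha}f(b)$ and $\frac{\Gamma(\alpha)}{(b-a)^{\alpha}}J_{b-}^{\alpha}f(a)$ respectively; multiplying through by $\alpha$ and using $\Gamma(\alpha+1)=\alpha\Gamma(\alpha)$ yields the left inequality of \eqref{sarikaya}.

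For the right inequality I would instead apply \eqref{convex tanim} to each of the two points separately against the endpoints $a,b$, obtaining $f(ta+(1-t)b)\le tf(a)+(1-t)f(b)$ and $f((1-t)a+tb)\le (1-t)f(a)+tf(b)$, hence $f(ta+(1-t)b)+f((1-t)a+tb)\le f(a)+f(b)$ for all $t\in[0,1]$. Weighting again by $t^{\alpha-1}$ and integrating over $[0,1]$, the right side integrates to $\frac{f(a)+f(b)}{\alpha}$, while the left side reproduces exactly the same combination of Riemann--Liouville integrals as above; multiplying through by $\frac{\alpha}{2}$ and using $\Gamma(\alpha+1)=\alpha\Gamma(\alpha)$ gives the stated upper bound.

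Finally, a word on the legitimacy of the steps: a convex function on the closed interval $[a,b]$ is bounded there, and $t^{\alpha-1}\in L[0,1]$ for every $\alpha>0$ (with $\int_0^1 t^{\alpha-1}\,dt=1/\alpha$), so every integral written above is finite and the manipulations are valid. I expect the only delicate bookkeeping to be the two affine substitutions—keeping track of the reversal of limits so that the weight $(b-u)^{\alpha-1}$ attaches to $J_{a+}^{\alpha}f(b)$ and $(u-a)^{\alpha-1}$ to $J_{b-}^{\alpha}f(a)$—after which both inequalities follow purely from \eqref{convex tanim} and linearity of the integral.
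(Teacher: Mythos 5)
Your argument is correct: applying convexity at the midpoint to the symmetric pair $ta+(1-t)b$, $(1-t)a+tb$, weighting by $t^{\alpha-1}$, integrating over $[0,1]$, and converting via the affine substitutions to the kernels $(b-u)^{\alpha-1}$ and $(u-a)^{\alpha-1}$ yields exactly \eqref{sarikaya}, and the endpoint estimate $f(ta+(1-t)b)+f((1-t)a+tb)\le f(a)+f(b)$ gives the right-hand side in the same way. Note that the paper itself states this theorem without proof, citing \cite{sarikaya}; your proof is essentially the original argument of Sar\i kaya \emph{et al.} in that reference, so there is nothing to reconcile with the present paper beyond the bookkeeping you already carried out correctly (in particular the limit reversal that attaches $(b-u)^{\alpha-1}$ to $J_{a+}^{\alpha}f(b)$ and $(u-a)^{\alpha-1}$ to $J_{b-}^{\alpha}f(a)$).
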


In \cite{i.iscan}, \.{I}\c{s}can gave \ the following Hermite-Hadamard-Fejer
integral inequalities via fractional integrals:

\begin{theorem}
\label{t3} Let $f:\left[ a,b\right] \rightarrow 
%TCIMACRO{\U{211d} }%
%BeginExpansion
\mathbb{R}
%EndExpansion
$ be convex function with $a<b$ and $f\in L\left[ a,b\right] .$If $g:\left[
a,b\right] \rightarrow 
%TCIMACRO{\U{211d} }%
%BeginExpansion
\mathbb{R}
%EndExpansion
$ is nonnegative, integrable and symmetric to $\left( a+b\right) /2,$ then \
the following inequalities for fractional integrals hold%
\begin{eqnarray*}
f\left( \frac{a+b}{2}\right) \left[ J_{a+}^{\alpha }g\left( b\right)
+J_{b-}^{\alpha }g\left( a\right) \right] &\leq &\left[ J_{a+}^{\alpha
}\left( fg\right) \left( b\right) +J_{b-}^{\alpha }\left( fg\right) \left(
a\right) \right] \\
&\leq &\frac{f\left( a\right) +f\left( b\right) }{2}\left[ J_{a+}^{\alpha
}g\left( b\right) +J_{b-}^{\alpha }g\left( a\right) \right]
\end{eqnarray*}%
with $\alpha >0.$
\end{theorem}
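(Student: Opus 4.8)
The plan is to adapt the classical proof of Fej\'{e}r's inequality $(\ref{FEJ})$ to the fractional setting, the only new ingredient being the change of variables that turns an integral over $[0,1]$ into a Riemann--Liouville integral in the sense of Definition \ref{d2}. First I would extract the two consequences of the symmetry of $g$ about $(a+b)/2$ that drive everything: substituting $t\mapsto a+b-t$ in the defining integrals shows that $J_{a+}^{\alpha}g(b)=J_{b-}^{\alpha}g(a)$, and, for $t\in[0,1]$, that $g\bigl(tb+(1-t)a\bigr)=g\bigl(ta+(1-t)b\bigr)$. The latter identity is what allows the weight $g$, evaluated along one parametrisation of $[a,b]$, to be rewritten along the other.

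For the left inequality I would apply convexity at the midpoint: with $x=tb+(1-t)a$ and $y=ta+(1-t)b$ one has $(x+y)/2=(a+b)/2$, hence
\[
f\left(\frac{a+b}{2}\right)\leq\frac{1}{2}\Bigl[f\bigl(tb+(1-t)a\bigr)+f\bigl(ta+(1-t)b\bigr)\Bigr].
\]
Multiplying both sides by $t^{\alpha-1}g\bigl(tb+(1-t)a\bigr)\geq0$, integrating over $t\in[0,1]$, and using $g\bigl(tb+(1-t)a\bigr)=g\bigl(ta+(1-t)b\bigr)$ in the second summand on the right, the substitutions $u=tb+(1-t)a$ and $u=ta+(1-t)b$ convert the three integrals into $\tfrac{\Gamma(\alpha)}{(b-a)^{\alpha}}J_{b-}^{\alpha}g(a)$, $\tfrac{\Gamma(\alpha)}{(b-a)^{\alpha}}J_{b-}^{\alpha}(fg)(a)$ and $\tfrac{\Gamma(\alpha)}{(b-a)^{\alpha}}J_{a+}^{\alpha}(fg)(b)$ respectively. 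Cancelling the positive factor $\Gamma(\alpha)/(b-a)^{\alpha}$ and rewriting $2J_{b-}^{\alpha}g(a)=J_{a+}^{\alpha}g(b)+J_{b-}^{\alpha}g(a)$ via the symmetry identity yields the first inequality.

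For the right inequality I would instead use convexity in its raw form twice, namely $f\bigl(tb+(1-t)a\bigr)\leq tf(b)+(1-t)f(a)$ and $f\bigl(ta+(1-t)b\bigr)\leq tf(a)+(1-t)f(b)$, and add them to obtain $f\bigl(tb+(1-t)a\bigr)+f\bigl(ta+(1-t)b\bigr)\leq f(a)+f(b)$ for every $t\in[0,1]$. Multiplying by the nonnegative weight $t^{\alpha-1}g\bigl(tb+(1-t)a\bigr)$, integrating over $[0,1]$, and applying the same two substitutions (again using the symmetry of $g$ on the left-hand term) turns this into $J_{a+}^{\alpha}(fg)(b)+J_{b-}^{\alpha}(fg)(a)\leq\frac{f(a)+f(b)}{2}\bigl[J_{a+}^{\alpha}g(b)+J_{b-}^{\alpha}g(a)\bigr]$, which is the second inequality; here only integrability of $g$ and nonnegativity of the weight $t^{\alpha-1}g(\cdot)$ are used.

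The genuinely routine parts are the two changes of variable. The step that needs the most care is the bookkeeping: one must consistently invoke $g\bigl(tb+(1-t)a\bigr)=g\bigl(ta+(1-t)b\bigr)$ so that, after substitution, each occurrence of $g$ or of $fg$ sits against the correct kernel $(u-a)^{\alpha-1}$ or $(b-u)^{\alpha-1}$, and hence against the correct operator $J_{b-}^{\alpha}(\cdot)(a)$ or $J_{a+}^{\alpha}(\cdot)(b)$. I expect this symmetry bookkeeping to be the main, and essentially only, obstacle; once the four resulting integrals are correctly identified with their Riemann--Liouville counterparts, both inequalities drop out after cancelling the common positive constant $\Gamma(\alpha)/(b-a)^{\alpha}$.
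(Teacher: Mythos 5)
Your argument is correct: the paper itself states Theorem \ref{t3} without proof, quoting it from \cite{i.iscan}, and your proof is essentially the standard one from that source — using the symmetry of $g$ to get $J_{a+}^{\alpha}g(b)=J_{b-}^{\alpha}g(a)$ and $g\bigl(tb+(1-t)a\bigr)=g\bigl(ta+(1-t)b\bigr)$, applying midpoint convexity (resp.\ plain convexity) against the nonnegative weight $t^{\alpha-1}g\bigl(tb+(1-t)a\bigr)$, and converting the resulting integrals into Riemann--Liouville integrals by the substitutions $u=tb+(1-t)a$ and $u=ta+(1-t)b$. Both inequalities follow exactly as you describe, so there is nothing to correct beyond the routine remark that $fg\in L[a,b]$ (automatic since a finite convex $f$ is bounded on $[a,b]$).
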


Set \textit{et al}. established some inequalities connected with the
left-hand side of the inequality (\ref{FEJ}) used the following lemma.

\begin{lemma}
\label{L1} \cite{e.set} Let $f:\left[ a,b\right] \rightarrow 
%TCIMACRO{\U{211d} }%
%BeginExpansion
\mathbb{R}
%EndExpansion
$ be a differentiable mapping on $\left( a,b\right) $ with $a<b$ and let $g:%
\left[ a,b\right] \rightarrow 
%TCIMACRO{\U{211d} }%
%BeginExpansion
\mathbb{R}
%EndExpansion
$. If $f^{\prime },g\in L\left[ a,b\right] $, then the following identity
for fractional integrals holds:%
\begin{eqnarray}
&&f\left( \frac{a+b}{2}\right) \left[ J_{\left( \frac{a+b}{2}\right)
-}^{\alpha }g\left( a\right) +J_{\left( \frac{a+b}{2}\right) +}^{\alpha
}g\left( b\right) \right]  \label{e. lemma} \\
&&-\left[ J_{\left( \frac{a+b}{2}\right) -}^{\alpha }\left( fg\right) \left(
a\right) +J_{\left( \frac{a+b}{2}\right) +}^{\alpha }\left( fg\right) \left(
b\right) \right]  \notag \\
&=&\frac{1}{\Gamma \left( \alpha \right) }\int_{a}^{b}k\left( t\right)
f^{\prime }\left( t\right) dt  \notag
\end{eqnarray}%
where%
\begin{equation*}
k\left( t\right) =\left\{ 
\begin{array}{c}
\int\limits_{a}^{t}\left( s-a\right) ^{\alpha -1}g\left( s\right) ds,\text{
\ \ \ \ \ \ }t\in \left[ a,\frac{a+b}{2}\right] \\ 
-\int\limits_{t}^{b}\left( b-s\right) ^{\alpha -1}g\left( s\right) ds,\text{
\ \ \ \ }t\in \left[ \frac{a+b}{2},b\right]%
\end{array}%
\right. .
\end{equation*}
\end{lemma}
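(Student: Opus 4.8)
The plan is to establish the identity by evaluating its right-hand side $\frac{1}{\Gamma(\alpha)}\int_{a}^{b}k(t)f'(t)\,dt$ directly, splitting the integral at the midpoint $\frac{a+b}{2}$ into the two pieces on which $k$ is given by a single formula and integrating by parts on each. The crucial observation is that on $\left[a,\frac{a+b}{2}\right]$ the kernel $k(t)=\int_{a}^{t}(s-a)^{\alpha-1}g(s)\,ds$ is a variable-endpoint integral, so by the fundamental theorem of calculus $k'(t)=(t-a)^{\alpha-1}g(t)$ for almost every $t$ and $k(a)=0$; similarly, on $\left[\frac{a+b}{2},b\right]$ we have $k'(t)=(b-t)^{\alpha-1}g(t)$ for almost every $t$ and $k(b)=0$, the sign working out because of the leading minus in the definition of $k$ there. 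Since $f$ is differentiable on $(a,b)$ with $f',g\in L\left[a,b\right]$, integration by parts is legitimate on each subinterval.

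Next I would carry out the two integrations by parts. On $\left[a,\frac{a+b}{2}\right]$, using $k(a)=0$, one gets
\[
\int_{a}^{\frac{a+b}{2}}k(t)f'(t)\,dt=f\!\left(\frac{a+b}{2}\right)\int_{a}^{\frac{a+b}{2}}(s-a)^{\alpha-1}g(s)\,ds-\int_{a}^{\frac{a+b}{2}}(t-a)^{\alpha-1}f(t)g(t)\,dt,
\]
and on $\left[\frac{a+b}{2},b\right]$, using $k(b)=0$,
\[
\int_{\frac{a+b}{2}}^{b}k(t)f'(t)\,dt=f\!\left(\frac{a+b}{2}\right)\int_{\frac{a+b}{2}}^{b}(b-s)^{\alpha-1}g(s)\,ds-\int_{\frac{a+b}{2}}^{b}(b-t)^{\alpha-1}f(t)g(t)\,dt.
\]
Here the midpoint boundary terms do not cancel: the left piece contributes $+k\!\left(\frac{a+b}{2}^{-}\right)f\!\left(\frac{a+b}{2}\right)$ and the right piece contributes $-k\!\left(\frac{a+b}{2}^{+}\right)f\!\left(\frac{a+b}{2}\right)$, and since $k\!\left(\frac{a+b}{2}^{+}\right)$ carries the extra minus sign from its definition, the two contributions add to produce the full $f\!\left(\frac{a+b}{2}\right)$-multiple displayed above (symmetry of $g$ is not needed anywhere).

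Finally I would add the two displays, divide by $\Gamma(\alpha)$, and recognize each of the four resulting integrals via Definition \ref{d2} as a Riemann--Liouville operator based at the midpoint: $\frac{1}{\Gamma(\alpha)}\int_{a}^{\frac{a+b}{2}}(t-a)^{\alpha-1}h(t)\,dt=J_{(\frac{a+b}{2})-}^{\alpha}h(a)$ and $\frac{1}{\Gamma(\alpha)}\int_{\frac{a+b}{2}}^{b}(b-t)^{\alpha-1}h(t)\,dt=J_{(\frac{a+b}{2})+}^{\alpha}h(b)$, taken with $h=g$ and then with $h=fg$. Substituting these in reproduces exactly the left-hand side of the asserted identity. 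I expect the only points genuinely requiring care to be the almost-everywhere differentiation of the piecewise kernel $k$ (two applications of the fundamental theorem of calculus, with attention to the sign on the second branch) and matching the orientation and base point of the integrals to the correct one-sided fractional operators $J_{(\frac{a+b}{2})\mp}^{\alpha}$; there is no deeper obstacle beyond this bookkeeping.
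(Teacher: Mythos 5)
Your proof is correct: the paper itself states Lemma \ref{L1} only as a citation to \cite{e.set} without reproducing a proof, and your argument---splitting $\int_a^b k(t)f'(t)\,dt$ at the midpoint, integrating by parts on each half using $k(a)=k(b)=0$ and the sign convention in the second branch of $k$, then identifying the four resulting integrals with $J_{(\frac{a+b}{2})-}^{\alpha}$ and $J_{(\frac{a+b}{2})+}^{\alpha}$ applied to $g$ and $fg$---is exactly the standard derivation of this identity. The bookkeeping of the boundary terms at $\frac{a+b}{2}$ and the observation that symmetry of $g$ is not needed are both handled correctly.
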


Set \textit{et al. proved the following three theorems.}

\begin{theorem}
\label{t4} \cite{e.set}Let $f:I\rightarrow 
%TCIMACRO{\U{211d} }%
%BeginExpansion
\mathbb{R}
%EndExpansion
$ be a differentiable mapping on $I^{\circ }$and $f^{\prime }\in L\left[ a,b%
\right] $ with $a<b$ and $g:\left[ a,b\right] \rightarrow 
%TCIMACRO{\U{211d} }%
%BeginExpansion
\mathbb{R}
%EndExpansion
$ is continuous. If $\left\vert f^{\prime }\right\vert $ is convex on $\left[
a,b\right] ,$ then the following inequality for fractional integrals holds:%
\begin{eqnarray}
&&\left\vert f\left( \frac{a+b}{2}\right) \left[ J_{\left( \frac{a+b}{2}%
\right) -}^{\alpha }g\left( a\right) +J_{\left( \frac{a+b}{2}\right)
+}^{\alpha }g\left( b\right) \right] \right.  \label{6,0} \\
&&\left. -\left[ J_{\left( \frac{a+b}{2}\right) -}^{\alpha }\left( fg\right)
\left( a\right) +J_{\left( \frac{a+b}{2}\right) +}^{\alpha }\left( fg\right)
\left( b\right) \right] \right\vert  \notag \\
&\leq &\frac{\left( b-a\right) ^{\alpha +1}\left\vert \left\vert
g\right\vert \right\vert _{\left[ a,b\right] ,\infty }}{2^{\alpha +1}\Gamma
\left( \alpha +1\right) \left( \alpha +1\right) }\left[ \left\vert f^{\prime
}\left( a\right) \right\vert +\left\vert f^{\prime }\left( b\right)
\right\vert \right]  \notag
\end{eqnarray}%
with $\alpha >0.$
\end{theorem}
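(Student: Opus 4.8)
The plan is to start from the fractional identity in Lemma~\ref{L1}, pass to absolute values, and then estimate the kernel $k$ and the derivative $f^{\prime }$ separately. Because $g$ is continuous on $\left[ a,b\right] $ it is bounded there, so for $t\in \left[ a,\frac{a+b}{2}\right] $ one has $\left\vert k\left( t\right) \right\vert \leq \left\vert \left\vert g\right\vert \right\vert _{\left[ a,b\right] ,\infty }\int_{a}^{t}\left( s-a\right) ^{\alpha -1}ds=\left\vert \left\vert g\right\vert \right\vert _{\left[ a,b\right] ,\infty }\frac{\left( t-a\right) ^{\alpha }}{\alpha }$, and, symmetrically, $\left\vert k\left( t\right) \right\vert \leq \left\vert \left\vert g\right\vert \right\vert _{\left[ a,b\right] ,\infty }\frac{\left( b-t\right) ^{\alpha }}{\alpha }$ for $t\in \left[ \frac{a+b}{2},b\right] $. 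Substituting these bounds into (\ref{e. lemma}), splitting $\int_{a}^{b}$ at $\frac{a+b}{2}$, and using $\alpha \Gamma \left( \alpha \right) =\Gamma \left( \alpha +1\right) $, the modulus of the left-hand side of (\ref{6,0}) is bounded above by $\frac{\left\vert \left\vert g\right\vert \right\vert _{\left[ a,b\right] ,\infty }}{\Gamma \left( \alpha +1\right) }\left( I_{1}+I_{2}\right) $, where $I_{1}=\int_{a}^{\frac{a+b}{2}}\left( t-a\right) ^{\alpha }\left\vert f^{\prime }\left( t\right) \right\vert dt$ and $I_{2}=\int_{\frac{a+b}{2}}^{b}\left( b-t\right) ^{\alpha }\left\vert f^{\prime }\left( t\right) \right\vert dt$.

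Next I would change variables on each piece so as to bring in the convexity of $\left\vert f^{\prime }\right\vert $. In $I_{1}$, put $t=\frac{1+\lambda }{2}a+\frac{1-\lambda }{2}b$ with $\lambda \in \left[ 0,1\right] $; then $t-a=\frac{1-\lambda }{2}\left( b-a\right) $, $dt=\frac{b-a}{2}d\lambda $, and convexity gives $\left\vert f^{\prime }\left( t\right) \right\vert \leq \frac{1+\lambda }{2}\left\vert f^{\prime }\left( a\right) \right\vert +\frac{1-\lambda }{2}\left\vert f^{\prime }\left( b\right) \right\vert $. Hence $I_{1}\leq \left( \frac{b-a}{2}\right) ^{\alpha +1}\int_{0}^{1}\left( 1-\lambda \right) ^{\alpha }\left[ \frac{1+\lambda }{2}\left\vert f^{\prime }\left( a\right) \right\vert +\frac{1-\lambda }{2}\left\vert f^{\prime }\left( b\right) \right\vert \right] d\lambda $, and the two elementary integrals $\int_{0}^{1}\left( 1-\lambda \right) ^{\alpha }\frac{1+\lambda }{2}d\lambda =\frac{\alpha +3}{2\left( \alpha +1\right) \left( \alpha +2\right) }$ and $\int_{0}^{1}\left( 1-\lambda \right) ^{\alpha }\frac{1-\lambda }{2}d\lambda =\frac{1}{2\left( \alpha +2\right) }$ evaluate the bound for $I_{1}$. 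The integral $I_{2}$ is treated by the mirror substitution $t=\frac{1+\lambda }{2}b+\frac{1-\lambda }{2}a$, which simply interchanges the roles of $a$ and $b$.

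Adding the two estimates, the coefficient of $\left\vert f^{\prime }\left( a\right) \right\vert $ (and, by symmetry, of $\left\vert f^{\prime }\left( b\right) \right\vert $) is $\left( \frac{b-a}{2}\right) ^{\alpha +1}\left[ \frac{\alpha +3}{2\left( \alpha +1\right) \left( \alpha +2\right) }+\frac{1}{2\left( \alpha +2\right) }\right] =\left( \frac{b-a}{2}\right) ^{\alpha +1}\frac{1}{\alpha +1}$, so that $I_{1}+I_{2}\leq \frac{\left( b-a\right) ^{\alpha +1}}{2^{\alpha +1}\left( \alpha +1\right) }\left[ \left\vert f^{\prime }\left( a\right) \right\vert +\left\vert f^{\prime }\left( b\right) \right\vert \right] $, and multiplying by $\frac{\left\vert \left\vert g\right\vert \right\vert _{\left[ a,b\right] ,\infty }}{\Gamma \left( \alpha +1\right) }$ reproduces the right-hand side of (\ref{6,0}) exactly. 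The only place where something genuinely has to happen is this final collapse of the two cross-coefficients to $\frac{1}{\alpha +1}$; the rest is the routine combination of the sup-norm bound for $k$ with a convexity bound for $\left\vert f^{\prime }\right\vert $ together with standard Beta-type integrals. Continuity of $g$ is used only to ensure $\left\vert \left\vert g\right\vert \right\vert _{\left[ a,b\right] ,\infty }<\infty $ and that $k$ is well defined.
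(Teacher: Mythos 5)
Your argument is correct, and it is essentially the approach the paper itself uses: this theorem is quoted from \cite{e.set}, and the paper's proof of its $s$-convex generalization (Theorem \ref{T6}, which reduces to (\ref{6,0}) at $s=1$) follows the same path — Lemma \ref{L1}, the sup-norm bound $\left\vert k(t)\right\vert \leq \left\Vert g\right\Vert _{\infty }(t-a)^{\alpha }/\alpha $ (resp. $(b-t)^{\alpha }/\alpha $), convexity of $\left\vert f^{\prime }\right\vert $, and Beta-type integrals. Your change of variables $t=\frac{1+\lambda }{2}a+\frac{1-\lambda }{2}b$ versus the paper's direct evaluation via $B_{1/2}(\alpha +1,s+1)$ is only a cosmetic difference, and your coefficient collapse $\frac{\alpha +3}{2(\alpha +1)(\alpha +2)}+\frac{1}{2(\alpha +2)}=\frac{1}{\alpha +1}$ checks out.
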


\begin{theorem}
\label{t5} \cite{e.set} Let $f:I\rightarrow 
%TCIMACRO{\U{211d} }%
%BeginExpansion
\mathbb{R}
%EndExpansion
$ be a differentiable mapping on $I^{\circ }$and $f^{\prime }\in L\left[ a,b%
\right] $ with $a<b$ and $g:\left[ a,b\right] \rightarrow 
%TCIMACRO{\U{211d} }%
%BeginExpansion
\mathbb{R}
%EndExpansion
$ is continuous. If $\left\vert f^{\prime }\right\vert ^{q}$ is convex on $%
\left[ a,b\right] ,$ $q>1$, then the following inequality for fractional
integrals holds:%
\begin{eqnarray}
&&  \label{7.0} \\
&&\left\vert f\left( \frac{a+b}{2}\right) \left[ J_{\left( \frac{a+b}{2}%
\right) -}^{\alpha }g\left( a\right) +J_{\left( \frac{a+b}{2}\right)
+}^{\alpha }g\left( b\right) \right] \right.  \notag \\
&&\left. -\left[ J_{\left( \frac{a+b}{2}\right) -}^{\alpha }\left( fg\right)
\left( a\right) +J_{\left( \frac{a+b}{2}\right) +}^{\alpha }\left( fg\right)
\left( b\right) \right] \right\vert  \notag \\
&\leq &\frac{\left( b-a\right) ^{\alpha +1}\left\vert \left\vert
g\right\vert \right\vert _{\left[ a,b\right] ,\infty }}{2^{\alpha
+1+1/q}\left( \alpha +1\right) \left( \alpha +2\right) ^{1/q}\Gamma \left(
\alpha +1\right) }\left\{ \left( \left( \alpha +3\right) \left\vert
f^{\prime }\left( a\right) \right\vert ^{q}+\left( \alpha +1\right)
\left\vert f^{\prime }\left( b\right) \right\vert ^{q}\right) ^{1/q}\right. 
\notag \\
&&\left. +\left( \left( \alpha +1\right) \left\vert f^{\prime }\left(
a\right) \right\vert ^{q}+\left( \alpha +3\right) \left\vert f^{\prime
}\left( b\right) \right\vert ^{q}\right) ^{1/q}\right\}  \notag
\end{eqnarray}%
with $\alpha >0.$
\end{theorem}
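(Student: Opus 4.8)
The plan is to start from the identity in Lemma \ref{L1}, take absolute values on both sides and pass the modulus inside the integral, so that the left-hand side of \eqref{7.0} is dominated by
\[
\frac{1}{\Gamma(\alpha)}\int_a^b \left\vert k(t)\right\vert\left\vert f^{\prime}(t)\right\vert\,dt .
\]
Splitting this integral at $(a+b)/2$ according to the two branches of $k$, I would bound the kernel crudely by pulling out $\left\Vert g\right\Vert_{[a,b],\infty}$: on $\left[a,\frac{a+b}{2}\right]$ one has $\left\vert k(t)\right\vert\le\left\Vert g\right\Vert_{[a,b],\infty}\int_a^t(s-a)^{\alpha-1}ds=\left\Vert g\right\Vert_{[a,b],\infty}(t-a)^{\alpha}/\alpha$, and symmetrically $\left\vert k(t)\right\vert\le\left\Vert g\right\Vert_{[a,b],\infty}(b-t)^{\alpha}/\alpha$ on $\left[\frac{a+b}{2},b\right]$. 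Since $\alpha\Gamma(\alpha)=\Gamma(\alpha+1)$, the task reduces to estimating the two integrals $\int_a^{(a+b)/2}(t-a)^{\alpha}\left\vert f^{\prime}(t)\right\vert dt$ and $\int_{(a+b)/2}^{b}(b-t)^{\alpha}\left\vert f^{\prime}(t)\right\vert dt$.

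For the first I would use, not ordinary H\"older, but the power-mean (weighted H\"older) inequality with weight $(t-a)^{\alpha}$ and exponents $1-1/q$ and $1/q$, giving
\[
\int_a^{\frac{a+b}{2}}(t-a)^{\alpha}\left\vert f^{\prime}(t)\right\vert dt\le\left(\int_a^{\frac{a+b}{2}}(t-a)^{\alpha}dt\right)^{1-\frac1q}\left(\int_a^{\frac{a+b}{2}}(t-a)^{\alpha}\left\vert f^{\prime}(t)\right\vert^{q}dt\right)^{\frac1q}.
\]
The first factor is elementary, a power of $\bigl((b-a)/2\bigr)^{\alpha+1}/(\alpha+1)$. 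For the second I would substitute $t=a+\tfrac{b-a}{2}x$ with $x\in[0,1]$, observe that $t=\bigl(1-\tfrac{x}{2}\bigr)a+\tfrac{x}{2}b$ is a convex combination of the endpoints $a$ and $b$, apply convexity of $\left\vert f^{\prime}\right\vert^{q}$, and evaluate $\int_0^1 x^{\alpha}\bigl(1-\tfrac{x}{2}\bigr)dx=\frac{\alpha+3}{2(\alpha+1)(\alpha+2)}$ together with $\int_0^1 \tfrac{x^{\alpha+1}}{2}dx=\frac{\alpha+1}{2(\alpha+1)(\alpha+2)}$. Multiplying the two factors, the powers of $(b-a)/2$ add up to $\alpha+1$ and the powers of $\alpha+1$ collapse to a single one, leaving
\[
\int_a^{\frac{a+b}{2}}(t-a)^{\alpha}\left\vert f^{\prime}(t)\right\vert dt\le\frac{(b-a)^{\alpha+1}}{2^{\alpha+1+1/q}(\alpha+1)(\alpha+2)^{1/q}}\Bigl((\alpha+3)\left\vert f^{\prime}(a)\right\vert^{q}+(\alpha+1)\left\vert f^{\prime}(b)\right\vert^{q}\Bigr)^{1/q}.
\]

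The integral over $\left[\frac{a+b}{2},b\right]$ is treated identically with the substitution $t=b-\tfrac{b-a}{2}x$; by the symmetry $a\leftrightarrow b$ it produces the companion bracket $\bigl((\alpha+1)\left\vert f^{\prime}(a)\right\vert^{q}+(\alpha+3)\left\vert f^{\prime}(b)\right\vert^{q}\bigr)^{1/q}$. Adding the two estimates and multiplying through by $\left\Vert g\right\Vert_{[a,b],\infty}/\Gamma(\alpha+1)$ yields exactly \eqref{7.0}.

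I expect the main difficulty to be organizational rather than conceptual: first, the choice of the power-mean inequality in place of plain H\"older (the latter, applied with $(t-a)^{\alpha p}$, would not reproduce the stated constant), and second, the careful bookkeeping of the exponents of $2$, $\alpha+1$ and $\alpha+2$ when the two H\"older factors are merged. A further point not to be overlooked is that the running variable on $\left[a,\frac{a+b}{2}\right]$ must be written as the convex combination $\bigl(1-\tfrac{x}{2}\bigr)a+\tfrac{x}{2}b$ of the \emph{endpoints} $a$ and $b$ (not of $a$ and $(a+b)/2$); it is precisely this parametrization that generates the coefficients $\alpha+3$ and $\alpha+1$, and the remaining computations are routine.
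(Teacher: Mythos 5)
Your proposal is correct, and it is essentially the argument this paper relies on: the statement itself is only quoted from \cite{e.set}, but the paper's proof of its $s$-convex generalization (Theorem \ref{T7}), which recovers \eqref{7.0} at $s=1$, proceeds exactly as you do — Lemma \ref{L1}, the sup-norm bound on the kernel $k$, the power-mean (weighted H\"{o}lder) inequality with weight $(t-a)^{\alpha}$ resp. $(b-t)^{\alpha}$, and convexity of $\left\vert f^{\prime}\right\vert ^{q}$ via the endpoint parametrization $t=\left(1-\tfrac{x}{2}\right)a+\tfrac{x}{2}b$. Your exponent bookkeeping and the values of the two elementary integrals check out, so no gaps.
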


\begin{theorem}
\label{theorem 8.0} \cite{e.set} Let $f:I\rightarrow 
%TCIMACRO{\U{211d} }%
%BeginExpansion
\mathbb{R}
%EndExpansion
$ be a differentiable mapping on $I^{\circ }$and $f^{\prime }\in L\left[ a,b%
\right] $ with $a<b$ and $g:\left[ a,b\right] \rightarrow 
%TCIMACRO{\U{211d} }%
%BeginExpansion
\mathbb{R}
%EndExpansion
$ is continuous. If $\left\vert f^{\prime }\right\vert ^{q}$ is convex on $%
\left[ a,b\right] ,$ $q>1$, then the following inequality for fractional
integrals holds:%
\begin{eqnarray}
&&\left\vert f\left( \frac{a+b}{2}\right) \left[ J_{\left( \frac{a+b}{2}%
\right) -}^{\alpha }g\left( a\right) +J_{\left( \frac{a+b}{2}\right)
+}^{\alpha }g\left( b\right) \right] \right.  \label{8,0} \\
&&\left. -\left[ J_{\left( \frac{a+b}{2}\right) -}^{\alpha }\left( fg\right)
\left( a\right) +J_{\left( \frac{a+b}{2}\right) +}^{\alpha }\left( fg\right)
\left( b\right) \right] \right\vert  \notag \\
&\leq &\frac{\left( b-a\right) ^{\alpha +1}\left\vert \left\vert
g\right\vert \right\vert _{\infty }}{2^{\alpha +1+2/q}\left( \alpha
p+1\right) ^{1/q}\Gamma \left( \alpha +1\right) }  \notag \\
&&\times \left[ 3\left\vert f^{\prime }\left( a\right) \right\vert
^{q}+\left\vert f^{\prime }\left( b\right) \right\vert ^{q1/q}+\left(
\left\vert f^{\prime }\left( a\right) \right\vert ^{q}+3\left\vert f^{\prime
}\left( b\right) \right\vert ^{q}\right) ^{1/q}\right]  \notag
\end{eqnarray}%
\newline
where $1/p+1/q=1.$\newline
\end{theorem}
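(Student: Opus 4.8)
The plan is to start from the identity in Lemma~\ref{L1}, which expresses the Fej\'er-type difference on the left-hand side of \eqref{8,0} as $\frac{1}{\Gamma(\alpha)}\int_a^b k(t)f'(t)\,dt$ with the explicit kernel $k$. Taking absolute values and using the triangle inequality for integrals, I would bound $\left|\int_a^b k(t)f'(t)\,dt\right|\le \int_a^b |k(t)|\,|f'(t)|\,dt$ and then split this integral at the midpoint $\frac{a+b}{2}$, so that on $[a,\frac{a+b}{2}]$ one has $|k(t)|=\int_a^t(s-a)^{\alpha-1}g(s)\,ds$ and on $[\frac{a+b}{2},b]$ one has $|k(t)|=\int_t^b(b-s)^{\alpha-1}g(s)\,ds$. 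Since $g$ is continuous (hence bounded) on $[a,b]$, I would pull out $\|g\|_\infty$ and evaluate the inner integrals explicitly: $\int_a^t(s-a)^{\alpha-1}\,ds=\frac{(t-a)^\alpha}{\alpha}$ and $\int_t^b(b-s)^{\alpha-1}\,ds=\frac{(b-t)^\alpha}{\alpha}$, giving $|k(t)|\le \frac{\|g\|_\infty}{\alpha}(t-a)^\alpha$ on the first subinterval and $|k(t)|\le\frac{\|g\|_\infty}{\alpha}(b-t)^\alpha$ on the second.

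Next, since this is the H\"older-based estimate (the appearance of $p$ with $1/p+1/q=1$ and the factor $(\alpha p+1)^{1/q}$ signals it), I would apply H\"older's inequality on each of the two subintervals separately. On $[a,\frac{a+b}{2}]$ I would write $\int_a^{(a+b)/2}(t-a)^\alpha |f'(t)|\,dt\le\left(\int_a^{(a+b)/2}(t-a)^{\alpha p}\,dt\right)^{1/p}\left(\int_a^{(a+b)/2}|f'(t)|^q\,dt\right)^{1/q}$, and similarly with $(b-t)^\alpha$ on the other half. The first factor evaluates to $\left(\frac{(b-a)^{\alpha p+1}}{2^{\alpha p+1}(\alpha p+1)}\right)^{1/p}$. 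For the second factor I would invoke the $s$-convexity-type hypothesis — here just convexity of $|f'|^q$ — writing $t=\lambda a+(1-\lambda)b$ to get $|f'(t)|^q\le\lambda|f'(a)|^q+(1-\lambda)|f'(b)|^q$, and then integrate over the appropriate half of the parameter range $\lambda\in[\frac12,1]$ (resp.\ $\lambda\in[0,\frac12]$), so that $\frac{1}{(b-a)}\int_a^{(a+b)/2}|f'(t)|^q\,dt=\int_{1/2}^{1}(\lambda|f'(a)|^q+(1-\lambda)|f'(b)|^q)\,d\lambda=\frac{3|f'(a)|^q+|f'(b)|^q}{8}$, and symmetrically $\frac{3|f'(b)|^q+|f'(a)|^q}{8}$ for the other half.

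Assembling the pieces, each half contributes a term of the form $\frac{\|g\|_\infty}{\alpha\Gamma(\alpha)}\cdot\left(\frac{(b-a)^{\alpha p+1}}{2^{\alpha p+1}(\alpha p+1)}\right)^{1/p}\cdot\left((b-a)\cdot\frac{3|f'(a)|^q+|f'(b)|^q}{8}\right)^{1/q}$; using $\alpha\Gamma(\alpha)=\Gamma(\alpha+1)$, collecting the powers of $(b-a)$ via $\frac{\alpha p+1}{p}+\frac{1}{q}=\alpha+1$, and collecting the powers of $2$ via $\frac{\alpha p+1}{p}+\frac{3}{q}=\alpha+1+\frac{2}{q}$, one recovers exactly the prefactor $\frac{(b-a)^{\alpha+1}\|g\|_\infty}{2^{\alpha+1+2/q}(\alpha p+1)^{1/q}\Gamma(\alpha+1)}$ times $\left[(3|f'(a)|^q+|f'(b)|^q)^{1/q}+(|f'(a)|^q+3|f'(b)|^q)^{1/q}\right]$, which is the claimed bound. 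The main obstacle I anticipate is purely bookkeeping: keeping the exponents of $(b-a)$ and of $2$ straight when the two H\"older factors recombine, and making sure the $1/p$ and $1/q$ powers of the elementary integrals telescope correctly into the stated constant; there is no conceptual difficulty beyond the standard Fej\'er/fractional technique already used in Theorems~\ref{t4} and~\ref{t5}.
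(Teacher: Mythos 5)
Your route is the same as the paper's: the statement is quoted from \cite{e.set}, and the present paper obtains it as the $s=1$ case of Theorem \ref{T8}, whose proof is exactly your argument --- Lemma \ref{L1}, the bound $\vert k(t)\vert \le \Vert g\Vert_\infty (t-a)^{\alpha }/\alpha$ on $[a,\frac{a+b}{2}]$ (resp. $\Vert g\Vert_\infty (b-t)^{\alpha }/\alpha$ on $[\frac{a+b}{2},b]$), H\"{o}lder with exponents $p,q$ on each half, and convexity of $\vert f^{\prime }\vert ^{q}$ to evaluate the $\int \vert f^{\prime }\vert ^{q}$ factors, which you compute correctly as $(b-a)\bigl(3\vert f^{\prime }(a)\vert ^{q}+\vert f^{\prime }(b)\vert ^{q}\bigr)/8$ and its mirror image.

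The one genuine problem is your final assembly. Your first H\"{o}lder factor is $\bigl((b-a)^{\alpha p+1}/(2^{\alpha p+1}(\alpha p+1))\bigr)^{1/p}$, so $(\alpha p+1)$ enters the denominator with exponent $1/p$, not $1/q$; the powers of $(b-a)$ and of $2$ telescope as you say, but what your argument actually proves is
\begin{equation*}
\frac{(b-a)^{\alpha +1}\Vert g\Vert _{\infty }}{2^{\alpha +1+2/q}\left( \alpha p+1\right) ^{1/p}\Gamma \left( \alpha +1\right) }\left[ \left( 3\left\vert f^{\prime }\left( a\right) \right\vert ^{q}+\left\vert f^{\prime }\left( b\right) \right\vert ^{q}\right) ^{1/q}+\left( \left\vert f^{\prime }\left( a\right) \right\vert ^{q}+3\left\vert f^{\prime }\left( b\right) \right\vert ^{q}\right) ^{1/q}\right] ,
\end{equation*}
which coincides with the displayed bound in (\ref{8,0}) only when $p=q=2$. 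For $p>2$ the printed constant, with $(\alpha p+1)^{1/q}$, is strictly smaller than the one above, so your claim to ``recover exactly'' the stated prefactor is not correct and the statement as printed is not established by this computation. In fairness, the printed exponent is almost certainly a misprint inherited from \cite{e.set} (note the garbled bracket ``$q1/q$'' in the statement itself, and that the constant displayed in Theorem \ref{T8} likewise does not literally reduce at $s=1$ to the one in (\ref{8,0}) despite the remark following it); the exponent $1/p$ that your bookkeeping produces is the one the method supports, and you should say so explicitly rather than assert an exact match with the quoted constant.
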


We recall the following function:

The incomplete Beta function by 
\begin{equation*}
B_{x}\left( \alpha ,\beta \right) =\int_{0}^{x}t^{\alpha -1}\left(
1-t\right) ^{\beta -1}dt.
\end{equation*}

In this paper, motivated by the recent results given in \cite{HUDMA},\cite%
{e.set}, we establish Hermite-Hadamard-Fejer type inequalities for s-convex
functions in the second sense via fractional integral. An interesting
feature of our results is that they provide new estimates on these types of
inequalities for fractional integrals.

\section{MA\.{I}N RESULTS}

Now, by using the Lemma $\ref{L1}$ we prove our main theorems.

\begin{theorem}
\label{T6}Let $f:I\subseteq \lbrack 0,\infty )\rightarrow 
%TCIMACRO{\U{211d} }%
%BeginExpansion
\mathbb{R}
%EndExpansion
$ be a differentiable mapping on $I^{o}$ and $f^{\prime }\in L\left[ a,b%
\right] $ with $a<b$ and $g:\left[ a,b\right] \rightarrow 
%TCIMACRO{\U{211d} }%
%BeginExpansion
\mathbb{R}
%EndExpansion
$ is continious. If $\left\vert f^{\prime }\right\vert $ is $s-convex$ on $%
\left[ a,b\right] $ for some fixed $s\in (0,1],$ then the following
inequality for fractional integrals holds:%
\begin{eqnarray}
&&\left\vert f\left( \frac{a+b}{2}\right) \left[ J_{\left( \frac{a+b}{2}%
\right) -}^{\alpha }g\left( a\right) +J_{\left( \frac{a+b}{2}\right)
+}^{\alpha }g\left( b\right) \right] \right.  \label{t61} \\
&&\left. -\left[ J_{\left( \frac{a+b}{2}\right) -}^{\alpha }\left( fg\right)
\left( a\right) +J_{\left( \frac{a+b}{2}\right) +}^{\alpha }\left( fg\right)
\left( b\right) \right] \right\vert  \notag \\
&\leq &\frac{\left( b-a\right) ^{\alpha +1}\left\vert \left\vert
g\right\vert \right\vert _{\left[ a,b\right] ,\infty }}{\Gamma \left( \alpha
+1\right) }  \notag \\
&&\times \left\{ B_{1/2}\left( \alpha +1,s+1\right) +\frac{1}{2^{\alpha
+s+1}\left( \alpha +s+1\right) }\right\} \left[ \left\vert f^{\prime }\left(
a\right) \right\vert +\left\vert f^{\prime }\left( b\right) \right\vert %
\right] .  \notag
\end{eqnarray}
\end{theorem}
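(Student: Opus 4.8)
The plan is to start from the identity in Lemma~\ref{L1}, which expresses the left-hand side of \eqref{t61} as $\frac{1}{\Gamma(\alpha)}\int_a^b k(t) f'(t)\,dt$, and then to estimate this integral. First I would take absolute values and use the triangle inequality to split the integral over $[a,b]$ into the two pieces $[a,\frac{a+b}{2}]$ and $[\frac{a+b}{2},b]$ matching the two branches of the kernel $k$. On each piece I would bound $|k(t)|$ crudely by pulling out $\|g\|_{[a,b],\infty}$ and integrating the power weight: for $t\in[a,\frac{a+b}{2}]$ we get $|k(t)|\le \|g\|_{\infty}\int_a^t (s-a)^{\alpha-1}\,ds = \|g\|_{\infty}\frac{(t-a)^\alpha}{\alpha}$, and symmetrically $|k(t)|\le \|g\|_{\infty}\frac{(b-t)^\alpha}{\alpha}$ on the other half.

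\medskip

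Next I would substitute $t = \frac{1+\tau}{2}a + \frac{1-\tau}{2}b$ type parametrizations — or more straightforwardly, on $[a,\frac{a+b}{2}]$ write $t = ta + (1-t)b$ is not possible, so instead use the substitution $t\mapsto$ a variable running over $[0,1/2]$ via $(t-a)/(b-a)$ on the first half and $(b-t)/(b-a)$ on the second half. Concretely, on $[a,\frac{a+b}{2}]$ put $\lambda = (t-a)/(b-a)\in[0,1/2]$, so $(t-a)^\alpha = (b-a)^\alpha \lambda^\alpha$, $dt = (b-a)\,d\lambda$, and $t = \lambda b + (1-\lambda)a$, whence $|f'(t)| = |f'(\lambda b + (1-\lambda)a)| \le \lambda^s |f'(b)| + (1-\lambda)^s |f'(a)|$ by $s$-convexity of $|f'|$. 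This turns the first integral into $\frac{(b-a)^{\alpha+1}\|g\|_\infty}{\alpha}\int_0^{1/2}\lambda^\alpha(\lambda^s|f'(b)| + (1-\lambda)^s|f'(a)|)\,d\lambda$, and the second half produces the mirror-image expression with $|f'(a)|$ and $|f'(b)|$ swapped. Adding the two, the combined coefficient of each of $|f'(a)|$ and $|f'(b)|$ is $\int_0^{1/2}\lambda^{\alpha+s}\,d\lambda + \int_0^{1/2}\lambda^\alpha(1-\lambda)^s\,d\lambda = \frac{1}{2^{\alpha+s+1}(\alpha+s+1)} + B_{1/2}(\alpha+1,s+1)$, using the definition of the incomplete Beta function recalled just before the theorem.

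\medskip

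Finally I would collect terms: the prefactor $\frac{1}{\Gamma(\alpha)}\cdot\frac{(b-a)^{\alpha+1}\|g\|_\infty}{\alpha} = \frac{(b-a)^{\alpha+1}\|g\|_\infty}{\Gamma(\alpha+1)}$ since $\alpha\Gamma(\alpha) = \Gamma(\alpha+1)$, and the bracket $\left\{B_{1/2}(\alpha+1,s+1) + \frac{1}{2^{\alpha+s+1}(\alpha+s+1)}\right\}$ multiplies $[|f'(a)| + |f'(b)|]$, which is exactly the claimed bound \eqref{t61}. I do not expect any serious obstacle here; the only mild care needed is (i) correctly matching the orientation of the $s$-convexity estimate on each half so that after summation the coefficients of $|f'(a)|$ and $|f'(b)|$ coincide (symmetry of the two halves under $t\leftrightarrow a+b-t$ guarantees this), and (ii) recognizing the two elementary integrals $\int_0^{1/2}\lambda^{\alpha+s}d\lambda$ and $\int_0^{1/2}\lambda^\alpha(1-\lambda)^s d\lambda$ as the stated closed forms. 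Checking the case $s=1$ against Theorem~\ref{t4} would be a useful sanity check, since then $B_{1/2}(\alpha+1,2)+\frac{1}{2^{\alpha+2}(\alpha+2)}$ should reduce to $\frac{1}{2^{\alpha+1}(\alpha+1)}\cdot\frac{1}{2} \cdot 2$-type expression consistent with \eqref{6,0}.
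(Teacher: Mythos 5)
Your proposal is correct and follows essentially the same route as the paper: start from Lemma~\ref{L1}, split at $\frac{a+b}{2}$, bound the kernel by $\|g\|_{[a,b],\infty}(t-a)^{\alpha}/\alpha$ (resp.\ $(b-t)^{\alpha}/\alpha$), apply $s$-convexity of $|f'|$, and identify the two resulting integrals with $B_{1/2}(\alpha+1,s+1)$ and $\frac{1}{2^{\alpha+s+1}(\alpha+s+1)}$. The only difference is cosmetic: you normalize via $\lambda=(t-a)/(b-a)$ at the outset, whereas the paper computes the integrals in the original variable and divides by $(b-a)^{s}$ at the end.
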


\begin{proof}
Since $\left\vert f^{\prime }\right\vert $ is $s-convex$ on $\left[ a,b%
\right] $ for some fixed $s\in (0,1]$, we know that for $t\in \left[ a,b%
\right] $%
\begin{equation*}
\left\vert f^{\prime }\left( t\right) \right\vert =\left\vert f^{\prime
}\left( \frac{b-t}{b-a}a+\frac{t-a}{b-a}b\right) \right\vert \leq \left( 
\frac{b-t}{b-a}\right) ^{s}\left\vert f^{\prime }\left( a\right) \right\vert
+\left( \frac{t-a}{b-a}\right) ^{s}\left\vert f^{\prime }\left( b\right)
\right\vert
\end{equation*}%
From Lemma \ref{L1} we have%
\begin{eqnarray*}
&&\left\vert f\left( \frac{a+b}{2}\right) \left[ J_{\left( \frac{a+b}{2}%
\right) -}^{\alpha }g\left( a\right) +J_{\left( \frac{a+b}{2}\right)
+}^{\alpha }g\left( b\right) \right] \right. \\
&&\left. -\left[ J_{\left( \frac{a+b}{2}\right) -}^{\alpha }\left( fg\right)
\left( a\right) +J_{\left( \frac{a+b}{2}\right) +}^{\alpha }\left( fg\right)
\left( b\right) \right] \right\vert \\
&\leq &\frac{1}{\Gamma \left( \alpha \right) }\left\{ \dint_{a}^{\frac{a+b}{2%
}}\left\vert \dint_{a}^{t}\left( s-a\right) ^{\alpha -1}g\left( s\right)
ds\right\vert \left\vert f^{\prime }\left( t\right) \right\vert dt\right. \\
&&\left. +\dint_{\frac{a+b}{2}}^{b}\left\vert \dint_{t}^{b}\left( b-s\right)
^{\alpha -1}g\left( s\right) ds\right\vert \left\vert f^{\prime }\left(
t\right) \right\vert dt\right\}
\end{eqnarray*}%
\begin{eqnarray*}
&\leq &\frac{\left\vert \left\vert g\right\vert \right\vert _{\left[ a,\frac{%
a+b}{2}\right] ,\infty }}{\left( b-a\right) ^{s}\Gamma \left( \alpha \right) 
}\dint_{a}^{\frac{a+b}{2}}\left( \dint_{a}^{t}\left( s-a\right) ^{\alpha
-1}ds\right) \left( \left( b-t\right) ^{s}\left\vert f^{\prime }\left(
a\right) \right\vert +\left( t-a\right) ^{s}\left\vert f^{\prime }\left(
b\right) \right\vert \right) dt \\
&&+\frac{\left\vert \left\vert g\right\vert \right\vert _{\left[ \frac{a+b}{2%
},b\right] ,\infty }}{\left( b-a\right) ^{s}\Gamma \left( \alpha \right) }%
\dint_{\frac{a+b}{2}}^{b}\left( \dint_{t}^{b}\left( b-s\right) ^{\alpha
-1}ds\right) \left( \left( b-t\right) ^{s}\left\vert f^{\prime }\left(
a\right) \right\vert +\left( t-a\right) ^{s}\left\vert f^{\prime }\left(
b\right) \right\vert \right) dt \\
&=&\frac{\left\vert \left\vert g\right\vert \right\vert _{\left[ a,\frac{a+b%
}{2}\right] ,\infty }}{\left( b-a\right) ^{s}\Gamma \left( \alpha +1\right) }%
\dint_{a}^{\frac{a+b}{2}}\left( t-a\right) ^{\alpha }\left[ \left(
b-t\right) ^{s}\left\vert f^{\prime }\left( a\right) \right\vert +\left(
t-a\right) ^{s}\left\vert f^{\prime }\left( b\right) \right\vert \right] dt
\end{eqnarray*}%
\begin{eqnarray*}
&&+\frac{\left\vert \left\vert g\right\vert \right\vert _{\left[ \frac{a+b}{2%
},b\right] ,\infty }}{\left( b-a\right) ^{s}\Gamma \left( \alpha +1\right) }%
\dint_{\frac{a+b}{2}}^{b}\left( b-t\right) ^{\alpha }\left[ \left(
b-t\right) ^{s}\left\vert f^{\prime }\left( a\right) \right\vert +\left(
t-a\right) ^{s}\left\vert f^{\prime }\left( b\right) \right\vert \right] dt
\\
&=&\frac{\left\vert \left\vert g\right\vert \right\vert _{\left[ a,\frac{a+b%
}{2}\right] ,\infty }}{\left( b-a\right) ^{s}\Gamma \left( \alpha +1\right) }%
\left[ \left\vert f^{\prime }\left( a\right) \right\vert \dint_{a}^{\frac{a+b%
}{2}}\left( t-a\right) ^{\alpha }\left( b-t\right) ^{s}dt+\left\vert
f^{\prime }\left( b\right) \right\vert \dint_{a}^{\frac{a+b}{2}}\left(
t-a\right) ^{\alpha +s}dt\right] \\
&&+\frac{\left\vert \left\vert g\right\vert \right\vert _{\left[ \frac{a+b}{2%
},b\right] ,\infty }}{\left( b-a\right) ^{s}\Gamma \left( \alpha +1\right) }%
\left[ \left\vert f^{\prime }\left( a\right) \right\vert \dint_{\frac{a+b}{2}%
}^{b}\left( b-t\right) ^{\alpha +s}dt+\left\vert f^{\prime }\left( b\right)
\right\vert \dint_{\frac{a+b}{2}}^{b}\left( b-t\right) ^{\alpha }\left(
t-a\right) ^{s}dt\right]
\end{eqnarray*}%
\begin{eqnarray*}
&=&\frac{\left\vert \left\vert g\right\vert \right\vert _{\left[ a,\frac{a+b%
}{2}\right] ,\infty }}{\left( b-a\right) ^{s}\Gamma \left( \alpha +1\right) }
\\
&&\times \left[ \left\vert f^{\prime }\left( a\right) \right\vert \left(
b-a\right) ^{\alpha +s+1}B_{1/2}\left( \alpha +1,s+1\right) +\left\vert
f^{\prime }\left( b\right) \right\vert \frac{\left( b-a\right) ^{\alpha +s+1}%
}{2^{\alpha +s+1}\left( \alpha +s+1\right) }\right] \\
&&+\frac{\left\vert \left\vert g\right\vert \right\vert _{\left[ \frac{a+b}{2%
},b\right] ,\infty }}{\left( b-a\right) ^{s}\Gamma \left( \alpha +1\right) }
\\
&&\times \left[ \left\vert f^{\prime }\left( a\right) \right\vert \frac{%
\left( b-a\right) ^{\alpha +s+1}}{2^{\alpha +s+1}\left( \alpha +s+1\right) }%
+\left\vert f^{\prime }\left( b\right) \right\vert \left( b-a\right)
^{\alpha +s+1}B_{1/2}\left( \alpha +1,s+1\right) \right] \\
&=&\frac{\left( b-a\right) ^{\alpha +s+1}}{\left( b-a\right) ^{s}\Gamma
\left( \alpha +1\right) } \\
&&\times \left\{ \left\vert \left\vert g\right\vert \right\vert _{\left[ a,%
\frac{a+b}{2}\right] ,\infty }\left( \left\vert f^{\prime }\left( a\right)
\right\vert B_{1/2}\left( \alpha +1,s+1\right) +\left\vert f^{\prime }\left(
b\right) \right\vert \frac{1}{2^{\alpha +s+1}\left( \alpha +s+1\right) }%
\right) \right. \\
&&\left. +\left\vert \left\vert g\right\vert \right\vert _{\left[ \frac{a+b}{%
2},b\right] ,\infty }\left( \left\vert f^{\prime }\left( a\right)
\right\vert \frac{1}{2^{\alpha +s+1}\left( \alpha +s+1\right) }+\left\vert
f^{\prime }\left( b\right) \right\vert B_{1/2}\left( \alpha +1,s+1\right)
\right) \right\}
\end{eqnarray*}%
\begin{eqnarray*}
&\leq &\frac{\left( b-a\right) ^{\alpha +1}\left\vert \left\vert
g\right\vert \right\vert _{\left[ a,b\right] ,\infty }}{\Gamma \left( \alpha
+1\right) } \\
&&\times \left\{ B_{1/2}\left( \alpha +1,s+1\right) \left[ \left\vert
f^{\prime }\left( a\right) \right\vert +\left\vert f^{\prime }\left(
b\right) \right\vert \right] +\frac{1}{2^{\alpha +s+1}\left( \alpha
+s+1\right) }\left[ \left\vert f^{\prime }\left( a\right) \right\vert
+\left\vert f^{\prime }\left( b\right) \right\vert \right] \right\} \\
&=&\frac{\left( b-a\right) ^{\alpha +1}\left\vert \left\vert g\right\vert
\right\vert _{\left[ a,b\right] ,\infty }}{\Gamma \left( \alpha +1\right) }
\\
&&\times \left\{ B_{1/2}\left( \alpha +1,s+1\right) +\frac{1}{2^{\alpha
+s+1}\left( \alpha +s+1\right) }\right\} \left[ \left\vert f^{\prime }\left(
a\right) \right\vert +\left\vert f^{\prime }\left( b\right) \right\vert %
\right]
\end{eqnarray*}%
where%
\begin{equation*}
\dint_{a}^{\frac{a+b}{2}}\left( t-a\right) ^{\alpha +s}dt=\dint_{\frac{a+b}{2%
}}^{b}\left( b-t\right) ^{\alpha +s}dt=\frac{\left( b-a\right) ^{\alpha +s+1}%
}{2^{\alpha +s+1}\left( \alpha +s+1\right) }
\end{equation*}%
and%
\begin{eqnarray*}
\dint_{a}^{\frac{a+b}{2}}\left( t-a\right) ^{\alpha }\left( b-t\right)
^{s}dt &=&\dint_{\frac{a+b}{2}}^{b}\left( b-t\right) ^{\alpha }\left(
t-a\right) ^{s}dt \\
&=&\left( b-a\right) ^{\alpha +s+1}B_{1/2}\left( \alpha +1,s+1\right) .
\end{eqnarray*}
\end{proof}

\begin{remark}
In Theorem 7, if we choose $s=1$ , then $\left( \ref{t61}\right) $ reduces
inequality ($\ref{6,0}$) of Theorem 4.
\end{remark}

\begin{theorem}
\label{T7} Let $f:I\subseteq \lbrack 0,\infty )\rightarrow 
%TCIMACRO{\U{211d} }%
%BeginExpansion
\mathbb{R}
%EndExpansion
$ be a differentiable mapping on $I^{o}$ and $f^{\prime }\in L\left[ a,b%
\right] $ with $a<b$ and let $g:\left[ a,b\right] \rightarrow 
%TCIMACRO{\U{211d} }%
%BeginExpansion
\mathbb{R}
%EndExpansion
$ is continious. If $\left\vert f^{\prime }\right\vert ^{q}$ is $s-convex$
on $\left[ a,b\right] $ for some fixed $s\in (0,1],$ $q>1$, then the
following inequality for fractional integrals holds:
\end{theorem}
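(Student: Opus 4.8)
The plan is to mimic the proof of Theorem~\ref{t5}, replacing ordinary convexity of $\left\vert f'\right\vert^q$ by $s$-convexity and keeping the power-mean inequality as the main tool. I would begin from the identity of Lemma~\ref{L1} and bound the kernel exactly as in the opening lines of the proof of Theorem~\ref{T6}: estimate $\left\vert g(s)\right\vert\le\|g\|_{[a,b],\infty}$, compute $\int_a^t(s-a)^{\alpha-1}ds=(t-a)^\alpha/\alpha$ and $\int_t^b(b-s)^{\alpha-1}ds=(b-t)^\alpha/\alpha$, so that the left-hand side is dominated by
\[
\frac{\|g\|_{[a,b],\infty}}{\Gamma(\alpha+1)}\left\{\int_a^{\frac{a+b}{2}}(t-a)^\alpha\left\vert f'(t)\right\vert dt+\int_{\frac{a+b}{2}}^b(b-t)^\alpha\left\vert f'(t)\right\vert dt\right\}.
\]

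Next I would apply the power-mean inequality to each of the two integrals, with weight $(t-a)^\alpha$, resp. $(b-t)^\alpha$, and exponents $1-1/q$ and $1/q$. Here $\left(\int_a^{\frac{a+b}{2}}(t-a)^\alpha dt\right)^{1-1/q}$ and the analogous factor on the other interval both equal $\bigl((b-a)^{\alpha+1}/(2^{\alpha+1}(\alpha+1))\bigr)^{1-1/q}$. Into the remaining factors I would substitute the $s$-convexity bound
\[
\left\vert f'(t)\right\vert^q\le\left(\frac{b-t}{b-a}\right)^s\left\vert f'(a)\right\vert^q+\left(\frac{t-a}{b-a}\right)^s\left\vert f'(b)\right\vert^q
\]
and evaluate the resulting integrals with the two identities already recorded at the end of the proof of Theorem~\ref{T6}, namely $\int_a^{\frac{a+b}{2}}(t-a)^{\alpha+s}dt=(b-a)^{\alpha+s+1}/\bigl(2^{\alpha+s+1}(\alpha+s+1)\bigr)$ and $\int_a^{\frac{a+b}{2}}(t-a)^\alpha(b-t)^sdt=(b-a)^{\alpha+s+1}B_{1/2}(\alpha+1,s+1)$. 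The change of variable $t\mapsto a+b-t$ turns the two integrals over $\left[\frac{a+b}{2},b\right]$ into the same expressions with $\left\vert f'(a)\right\vert$ and $\left\vert f'(b)\right\vert$ interchanged, which yields the two summands of the final bound.

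The last step is arithmetic: collect the common prefactor $(b-a)^{\alpha+1}\|g\|_{[a,b],\infty}/\Gamma(\alpha+1)$, merge the power $2^{-(\alpha+1)(1-1/q)}$ coming from the power-mean step with the power $2^{-(\alpha+s+1)/q}$ and the $(\alpha+s+1)$-type factors coming from the inner integrals, and express the bound through $B_{1/2}(\alpha+1,s+1)$, $1/(2^{\alpha+s+1}(\alpha+s+1))$ and $1/(2^{\alpha+1}(\alpha+1))$. As a check, putting $s=1$ and using $B_{1/2}(\alpha+1,2)=(\alpha+3)/(2^{\alpha+2}(\alpha+1)(\alpha+2))$ should reproduce the constant in Theorem~\ref{t5}. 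I expect the only genuine obstacle to be this bookkeeping---in particular keeping the two powers of $2$ and the two powers of $(b-a)$ (one batch from the kernel, one from the normalization $1/(b-a)^s$ in the $s$-convexity estimate) separate until the very end---since no idea beyond Theorem~\ref{T6} and the power-mean inequality is needed.
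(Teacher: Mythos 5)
Your proposal follows the paper's own route for Theorem \ref{T7} essentially verbatim: Lemma \ref{L1}, the sup-norm bound on $g$, the power-mean inequality with weights $(t-a)^{\alpha}$ and $(b-t)^{\alpha}$ and the factor $\left(\frac{(b-a)^{\alpha+1}}{2^{\alpha+1}(\alpha+1)}\right)^{1-1/q}$, the $s$-convexity estimate for $\vert f'\vert^{q}$, and the two integral evaluations via $B_{1/2}(\alpha+1,s+1)$ and $\frac{(b-a)^{\alpha+s+1}}{2^{\alpha+s+1}(\alpha+s+1)}$; the paper merely applies the power mean before inserting $\Vert g\Vert_{[a,b],\infty}$, which is equivalent. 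The only divergence is the endpoint: you stop at the bound expressed through $B_{1/2}(\alpha+1,s+1)$ (the paper's penultimate display), whereas the printed inequality (\ref{T71}) comes from one further step replacing $2^{\alpha+s+1}(\alpha+s+1)B_{1/2}(\alpha+1,s+1)$ and $1$ by $(\alpha+s+1)(\alpha+3)$ and $2^{1-s}(\alpha+1)(\alpha+2)$ after adjusting the prefactor (where the paper's $(\alpha+s+q)^{1/q}$ should read $(\alpha+s+1)^{1/q}$); that extra step would require $B_{1/2}(\alpha+1,s+1)\le\frac{\alpha+3}{2^{\alpha+2}(\alpha+1)(\alpha+2)}$, which is actually reversed for $s<1$ since $(1-t)^{s}\ge 1-t$ on $\left[0,\frac{1}{2}\right]$, so your incomplete-beta form is the sharper (and safely correct) conclusion, and it reduces to Theorem \ref{t5} at $s=1$ exactly as you checked.
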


\begin{eqnarray*}
&&\left\vert f\left( \frac{a+b}{2}\right) \left[ J_{\left( \frac{a+b}{2}%
\right) -}^{\alpha }g\left( a\right) +J_{\left( \frac{a+b}{2}\right)
+}^{\alpha }g\left( b\right) \right] \right. \\
&&\left. -\left[ J_{\left( \frac{a+b}{2}\right) -}^{\alpha }\left( fg\right)
\left( a\right) +J_{\left( \frac{a+b}{2}\right) +}^{\alpha }\left( fg\right)
\left( b\right) \right] \right\vert \\
&\leq &\frac{\left( b-a\right) ^{\alpha +1}\left\vert \left\vert
g\right\vert \right\vert _{\left[ a,b\right] ,\infty }}{2^{\alpha +1+\frac{1%
}{q}}\left( \alpha +1\right) \left( \alpha +2\right) ^{1/q}\left( \alpha
+s+q\right) ^{1/q}\Gamma \left( \alpha +1\right) }
\end{eqnarray*}%
\begin{eqnarray}
&&\times \left\{ \left( \left( \alpha +s+1\right) \left( \alpha +3\right)
\left\vert f^{\prime }\left( a\right) \right\vert ^{q}+2^{1-s}\left( \alpha
+1\right) \left( \alpha +2\right) \left\vert f^{\prime }\left( b\right)
\right\vert ^{q}\right) ^{1/q}\right.  \label{T71} \\
&&+\left. \left( 2^{1-s}\left( \alpha +1\right) \left( \alpha +2\right)
\left\vert f^{\prime }\left( a\right) \right\vert ^{q}+\left( \alpha
+s+1\right) \left( \alpha +3\right) \left\vert f^{\prime }\left( b\right)
\right\vert ^{q}\right) ^{1/q}\right\} .  \notag
\end{eqnarray}

\begin{proof}
Since $\left\vert f^{\prime }\right\vert $ is $s-convex$ on $\left[ a,b%
\right] $ for some fixed $s\in (0,1]$, we know that for $t\in \left[ a,b%
\right] $%
\begin{equation*}
\left\vert f^{\prime }\left( t\right) \right\vert ^{q}=\left\vert f^{\prime
}\left( \frac{b-t}{b-a}a+\frac{t-a}{b-a}b\right) \right\vert ^{q}\leq \left( 
\frac{b-t}{b-a}\right) ^{s}\left\vert f^{\prime }\left( a\right) \right\vert
^{q}+\left( \frac{t-a}{b-a}\right) ^{s}\left\vert f^{\prime }\left( b\right)
\right\vert ^{q}
\end{equation*}%
Using Lemma \ref{L1}, power mean inequality and convexity of $\left\vert
f^{\prime }\right\vert ^{q},$ it follows that%
\begin{eqnarray*}
&&\left\vert f\left( \frac{a+b}{2}\right) \left[ J_{\left( \frac{a+b}{2}%
\right) -}^{\alpha }g\left( a\right) +J_{\left( \frac{a+b}{2}\right)
+}^{\alpha }g\left( b\right) \right] \right. \\
&&\left. -\left[ J_{\left( \frac{a+b}{2}\right) -}^{\alpha }\left( fg\right)
\left( a\right) +J_{\left( \frac{a+b}{2}\right) +}^{\alpha }\left( fg\right)
\left( b\right) \right] \right\vert \\
&\leq &\frac{1}{\Gamma \left( \alpha \right) }\left( \dint_{a}^{\frac{a+b}{2}%
}\left\vert \dint_{a}^{t}\left( s-a\right) ^{\alpha -1}g\left( s\right)
ds\right\vert dt\right) ^{1-1/q} \\
&&\times \left( \dint_{a}^{\frac{a+b}{2}}\left\vert \dint_{a}^{t}\left(
s-a\right) ^{\alpha -1}g\left( s\right) ds\right\vert \left\vert f^{\prime
}\left( t\right) \right\vert ^{q}dt\right) ^{1/q} \\
&&+\frac{1}{\Gamma \left( \alpha \right) }\left( \dint_{\frac{a+b}{2}%
}^{b}\left\vert \dint_{t}^{b}\left( b-s\right) ^{\alpha -1}g\left( s\right)
ds\right\vert dt\right) ^{1-1/q} \\
&&\times \left( \dint_{\frac{a+b}{2}}^{b}\left\vert \dint_{t}^{b}\left(
b-s\right) ^{\alpha -1}g\left( s\right) ds\right\vert \left\vert f^{\prime
}\left( t\right) \right\vert ^{q}dt\right) ^{1/q} \\
&\leq &\frac{\left\vert \left\vert g\right\vert \right\vert _{\left[ a,\frac{%
a+b}{2}\right] ,\infty }}{\Gamma \left( \alpha \right) }\left( \dint_{a}^{%
\frac{a+b}{2}}\left\vert \dint_{a}^{t}\left( s-a\right) ^{\alpha
-1}ds\right\vert dt\right) ^{1-1/q} \\
&&\times \left( \dint_{a}^{\frac{a+b}{2}}\left\vert \dint_{a}^{t}\left(
s-a\right) ^{\alpha -1}ds\right\vert \left\vert f^{\prime }\left( t\right)
\right\vert ^{q}dt\right) ^{1/q}
\end{eqnarray*}%
\begin{eqnarray*}
&&+\frac{\left\vert \left\vert g\right\vert \right\vert _{\left[ \frac{a+b}{2%
},b\right] ,\infty }}{\Gamma \left( \alpha \right) }\left( \dint_{\frac{a+b}{%
2}}^{b}\left\vert \dint_{t}^{b}\left( b-s\right) ^{\alpha -1}ds\right\vert
dt\right) ^{1-1/q} \\
&&\times \left( \dint_{\frac{a+b}{2}}^{b}\left\vert \dint_{t}^{b}\left(
b-s\right) ^{\alpha -1}ds\right\vert \left\vert f^{\prime }\left( t\right)
\right\vert ^{q}dt\right) ^{1/q}
\end{eqnarray*}%
\begin{eqnarray*}
&\leq &\frac{1}{\alpha \Gamma \left( \alpha \right) }\left( \frac{\left(
b-a\right) ^{\alpha +1}}{2^{\alpha +1}\left( \alpha +1\right) }\right)
^{1-1/q} \\
&&\times \left\{ \frac{\left\vert \left\vert g\right\vert \right\vert _{%
\left[ a,\frac{a+b}{2}\right] ,\infty }}{\left( b-a\right) ^{s/q}}\left[
\dint_{a}^{\frac{a+b}{2}}\left( \left( t-a\right) ^{\alpha }\left(
b-t\right) ^{s}\left\vert f^{\prime }\left( a\right) \right\vert ^{q}+\left(
t-a\right) ^{\alpha +s}\left\vert f^{\prime }\left( b\right) \right\vert
^{q}\right) dt\right] ^{1/q}\right.
\end{eqnarray*}%
\begin{eqnarray*}
&&+\left. \frac{\left\vert \left\vert g\right\vert \right\vert _{\left[ 
\frac{a+b}{2},b\right] ,\infty }}{\left( b-a\right) ^{s/q}}\left[ \dint_{%
\frac{a+b}{2}}^{b}\left( \left( b-t\right) ^{\alpha +s}\left\vert f^{\prime
}\left( a\right) \right\vert ^{q}+\left( b-t\right) ^{\alpha }\left(
t-a\right) ^{s}\left\vert f^{\prime }\left( b\right) \right\vert ^{q}\right)
dt\right] ^{1/q}\right\} \\
&=&\frac{1}{\Gamma \left( \alpha +1\right) }\left( \frac{\left( b-a\right)
^{\alpha +1}}{2^{\alpha +1}\left( \alpha +1\right) }\right) ^{1-1/q}\left\{
\left( \frac{\left\vert \left\vert g\right\vert \right\vert _{\left[ a,\frac{%
a+b}{2}\right] ,\infty }}{\left( b-a\right) ^{s/q}}\right. \right.
\end{eqnarray*}%
\begin{eqnarray*}
&&\left. \times \left[ \left( \left( b-a\right) ^{\alpha +s+1}B_{1/2}\left(
\alpha +1,s+1\right) \left\vert f^{\prime }\left( a\right) \right\vert ^{q}+%
\frac{\left( b-a\right) ^{\alpha +s+1}}{2^{\alpha +s+1}\left( \alpha
+s+1\right) }\left\vert f^{\prime }\left( b\right) \right\vert ^{q}\right) dt%
\right] ^{1/q}\right) \\
&&+\left( \frac{\left\vert \left\vert g\right\vert \right\vert _{\left[ 
\frac{a+b}{2},b\right] ,\infty }}{\left( b-a\right) ^{s/q}}\right. \\
&&\left. \left. \left[ \times \left( \frac{\left( b-a\right) ^{\alpha +s+1}}{%
2^{\alpha +s+1}\left( \alpha +s+1\right) }\left\vert f^{\prime }\left(
a\right) \right\vert ^{q}+\left( b-a\right) ^{\alpha +s+1}B_{1/2}\left(
\alpha +1,s+1\right) \left\vert f^{\prime }\left( b\right) \right\vert
^{q}\right) dt\right] ^{1/q}\right) \right\}
\end{eqnarray*}%
\begin{eqnarray*}
&\leq &\frac{\left( b-a\right) ^{\alpha +1}\left\vert \left\vert
g\right\vert \right\vert _{\left[ a,b\right] ,\infty }}{2^{\alpha +1+\frac{s%
}{q}}\left( \alpha +1\right) ^{1-1/q}\left( \alpha +s+q\right) ^{1/q}\Gamma
\left( \alpha +1\right) } \\
&&\times \left\{ \left( 2^{\alpha +s+1}\left( \alpha +s+1\right)
B_{1/2}\left( \alpha +1,s+1\right) \left\vert f^{\prime }\left( a\right)
\right\vert ^{q}+\left\vert f^{\prime }\left( b\right) \right\vert
^{q}\right) ^{1/q}\right.
\end{eqnarray*}%
\begin{eqnarray*}
&&\left. \left( \left\vert f^{\prime }\left( a\right) \right\vert
^{q}+2^{\alpha +s+1}\left( \alpha +s+1\right) B_{1/2}\left( \alpha
+1,s+1\right) \left\vert f^{\prime }\left( b\right) \right\vert ^{q}\right)
^{1/q}\right\} \\
&\leq &\frac{\left( b-a\right) ^{\alpha +1}\left\vert \left\vert
g\right\vert \right\vert _{\left[ a,b\right] ,\infty }}{2^{\alpha +1+\frac{1%
}{q}}\left( \alpha +1\right) \left( \alpha +2\right) ^{1/q}\left( \alpha
+s+q\right) ^{1/q}\Gamma \left( \alpha +1\right) }
\end{eqnarray*}%
\begin{eqnarray*}
&&\times \left\{ \left( \left( \alpha +s+1\right) \left( \alpha +3\right)
\left\vert f^{\prime }\left( a\right) \right\vert ^{q}+2^{1-s}\left( \alpha
+1\right) \left( \alpha +2\right) \left\vert f^{\prime }\left( b\right)
\right\vert ^{q}\right) ^{1/q}\right. \\
&&+\left. \left( 2^{1-s}\left( \alpha +1\right) \left( \alpha +2\right)
\left\vert f^{\prime }\left( a\right) \right\vert ^{q}+\left( \alpha
+s+1\right) \left( \alpha +3\right) \left\vert f^{\prime }\left( b\right)
\right\vert ^{q}\right) ^{1/q}\right\} .
\end{eqnarray*}
\end{proof}

\begin{remark}
In Theorem 8, if we choose $s=1$ , then $\left( \ref{T71}\right) $ reduces
inequality $\left( \ref{7.0}\right) \;$of Theorem 5.
\end{remark}

\begin{theorem}
\label{T8}Let $f:I\subseteq \lbrack 0,\infty )\rightarrow 
%TCIMACRO{\U{211d} }%
%BeginExpansion
\mathbb{R}
%EndExpansion
$ be a differentiable mapping on $I^{o}$ and $f^{\prime }\in L\left[ a,b%
\right] $ with $a<b$ and let $g:\left[ a,b\right] \rightarrow 
%TCIMACRO{\U{211d} }%
%BeginExpansion
\mathbb{R}
%EndExpansion
$ is continious. If $\left\vert f^{\prime }\right\vert ^{q}$ is $s-convex$
on $\left[ a,b\right] $ \ for some fixed $s\in (0,1],$ $q>1$, then the
following inequality for fractional integrals holds:%
\begin{eqnarray}
&&\left\vert f\left( \frac{a+b}{2}\right) \left[ J_{\left( \frac{a+b}{2}%
\right) -}^{\alpha }g\left( a\right) +J_{\left( \frac{a+b}{2}\right)
+}^{\alpha }g\left( b\right) \right] \right.  \label{T81} \\
&&\left. -\left[ J_{\left( \frac{a+b}{2}\right) -}^{\alpha }\left( fg\right)
\left( a\right) +J_{\left( \frac{a+b}{2}\right) +}^{\alpha }\left( fg\right)
\left( b\right) \right] \right\vert  \notag \\
&\leq &\frac{\left( b-a\right) ^{\alpha +1}\left\vert \left\vert
g\right\vert \right\vert _{\left[ a,b\right] ,\infty }}{2^{\alpha +1+\frac{s%
}{q}}\left( \alpha p+1\right) \left( \alpha +2\right) ^{1/p}\left(
s+1\right) ^{1/q}\Gamma \left( \alpha +1\right) }  \notag \\
&&\times \left[ \left( \left\vert f^{\prime }\left( a\right) \right\vert
^{q}\left( 2^{s+1}-1\right) +\left\vert f^{\prime }\left( b\right)
\right\vert ^{q}\right) ^{1/q}+\left( \left\vert f^{\prime }\left( a\right)
\right\vert ^{q}+\left\vert f^{\prime }\left( b\right) \right\vert
^{q}\left( 2^{s+1}-1\right) \right) ^{1/q}\right]  \notag
\end{eqnarray}%
where $\frac{1}{p}+\frac{1}{q}=1.$
\end{theorem}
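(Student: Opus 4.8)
The plan is to run the same scheme as in the proofs of Theorem \ref{T6} and Theorem \ref{T7}, only replacing the power--mean inequality by H\"older's inequality with conjugate exponents $p,q$. Starting from the identity in Lemma \ref{L1}, I would take absolute values, pass the modulus inside the two integrals, and bound the kernel on each half of $[a,b]$: for $t\in[a,\frac{a+b}{2}]$ one has $\bigl|\int_a^t (s-a)^{\alpha-1}g(s)\,ds\bigr|\le \|g\|_{[a,\frac{a+b}{2}],\infty}\,\frac{(t-a)^{\alpha}}{\alpha}\le \|g\|_{[a,b],\infty}\,\frac{(t-a)^{\alpha}}{\alpha}$, and symmetrically $\bigl|\int_t^b (b-s)^{\alpha-1}g(s)\,ds\bigr|\le \|g\|_{[a,b],\infty}\,\frac{(b-t)^{\alpha}}{\alpha}$ on $[\frac{a+b}{2},b]$. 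Absorbing $\frac{1}{\alpha\Gamma(\alpha)}=\frac{1}{\Gamma(\alpha+1)}$, the task reduces to estimating $\int_a^{\frac{a+b}{2}}(t-a)^{\alpha}|f'(t)|\,dt$ and $\int_{\frac{a+b}{2}}^{b}(b-t)^{\alpha}|f'(t)|\,dt$.

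Next I would apply H\"older's inequality to each of these, splitting off $|f'(t)|$; for the first term this gives the bound $\bigl(\int_a^{\frac{a+b}{2}}(t-a)^{\alpha p}\,dt\bigr)^{1/p}\bigl(\int_a^{\frac{a+b}{2}}|f'(t)|^{q}\,dt\bigr)^{1/q}$, whose first factor is the elementary integral $\bigl(\frac{(b-a)^{\alpha p+1}}{2^{\alpha p+1}(\alpha p+1)}\bigr)^{1/p}$. For the second factor I would insert the $s$-convexity estimate $|f'(t)|^{q}\le\bigl(\frac{b-t}{b-a}\bigr)^{s}|f'(a)|^{q}+\bigl(\frac{t-a}{b-a}\bigr)^{s}|f'(b)|^{q}$ and use the two half--interval integrals $\int_a^{\frac{a+b}{2}}(t-a)^{s}\,dt=\frac{(b-a)^{s+1}}{2^{s+1}(s+1)}$ and $\int_a^{\frac{a+b}{2}}(b-t)^{s}\,dt=\frac{(b-a)^{s+1}(2^{s+1}-1)}{2^{s+1}(s+1)}$. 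This is exactly the step that produces the factors $\frac{1}{s+1}$ and $2^{s+1}-1$ which are new relative to the classical ($s=1$) case of Theorem \ref{theorem 8.0}, and it yields the term $\bigl((2^{s+1}-1)|f'(a)|^{q}+|f'(b)|^{q}\bigr)^{1/q}$. Treating the half $[\frac{a+b}{2},b]$ in the same way, where now it is $(t-a)^{s}$ that carries the large weight $\frac{(b-a)^{s+1}(2^{s+1}-1)}{2^{s+1}(s+1)}$, produces the companion term $\bigl(|f'(a)|^{q}+(2^{s+1}-1)|f'(b)|^{q}\bigr)^{1/q}$, so that the final estimate is symmetric in $|f'(a)|$ and $|f'(b)|$ as claimed.

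Finally I would collect the constants. The powers of $(b-a)$ coming out of the H\"older step add up to $\alpha+\frac{1}{p}+\frac{s+1}{q}=\alpha+1+\frac{s}{q}$ by $\frac{1}{p}+\frac{1}{q}=1$, the powers of $2$ combine into $2^{\alpha+1+\frac{s}{q}}$, and $\Gamma(\alpha+1)$ together with the factors $(\alpha p+1)^{1/p}$ and $(s+1)^{1/q}$ sit in the denominator; gathering $\|g\|_{[a,b],\infty}$ from both halves completes the bound. The only real difficulty here is the bookkeeping --- carrying the powers of $2$, $(b-a)$, $(\alpha p+1)$ and $(s+1)$ consistently through both halves and checking that they package into the displayed form --- plus, if one wants the constant written in precisely the stated shape, a routine rearrangement of the elementary power integral; no new idea beyond the $s$-convexity substitution and the two half--interval integrals is needed.
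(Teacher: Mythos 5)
Your scheme is the same as the paper's: Lemma \ref{L1}, then H\"older's inequality with exponents $p,q$, then the $s$-convexity bound on $\left\vert f^{\prime}\right\vert^{q}$ together with the two half-interval integrals; whether you pull $\left\Vert g\right\Vert_{[a,b],\infty}$ out of the kernel before applying H\"older (as you do) or after (as the paper does) makes no difference, and your auxiliary integrals are exactly the ones the paper lists at the end of its proof. Up to a small bookkeeping slip (the powers of $b-a$ come to $\alpha+\frac{1}{p}+\frac{s+1}{q}-\frac{s}{q}=\alpha+1$ once you account for the factor $(b-a)^{-s/q}$ carried by the $s$-convexity weights, not $\alpha+1+\frac{s}{q}$), your computation is correct and yields the bound with denominator $2^{\alpha+1+\frac{s}{q}}\left(\alpha p+1\right)^{1/p}\left(s+1\right)^{1/q}\Gamma\left(\alpha+1\right)$.

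What you should not wave away is the final identification with the displayed constant. The statement (\ref{T81}) has denominator $2^{\alpha+1+\frac{s}{q}}\left(\alpha p+1\right)\left(\alpha+2\right)^{1/p}\left(s+1\right)^{1/q}\Gamma\left(\alpha+1\right)$, and since $\left(\alpha p+1\right)\left(\alpha+2\right)^{1/p}>\left(\alpha p+1\right)^{1/p}$ for all $\alpha>0$, $p>1$, the printed bound is strictly stronger than the one your argument produces --- and also stronger than what the paper's own argument produces, since it uses the very same auxiliary integrals; no ``routine rearrangement of the elementary power integral'' can bridge this. The extra factors look like a misprint imported from the constant of Theorem \ref{T7}: with $s=1$ your constant collapses to the form of Theorem \ref{theorem 8.0} (modulo the $1/p$ versus $1/q$ exponent on $\alpha p+1$ there), exactly as the paper's remark requires, whereas the printed constant of (\ref{T81}) does not. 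So your proof establishes the inequality with $\left(\alpha p+1\right)^{1/p}$ in place of $\left(\alpha p+1\right)\left(\alpha+2\right)^{1/p}$; as a proof of the literally stated inequality it has the same gap as the paper's, and you should state the corrected constant explicitly rather than defer the discrepancy to bookkeeping.
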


\begin{proof}
Using Lemma \ref{L1}, H\"{o}lder's inequality and the s-convex of $%
\left\vert f^{\prime }\right\vert ^{q}$ it follows that%
\begin{eqnarray*}
&&\left\vert f\left( \frac{a+b}{2}\right) \left[ J_{\left( \frac{a+b}{2}%
\right) -}^{\alpha }g\left( a\right) +J_{\left( \frac{a+b}{2}\right)
+}^{\alpha }g\left( b\right) \right] -\left[ J_{\left( \frac{a+b}{2}\right)
-}^{\alpha }\left( fg\right) \left( a\right) +J_{\left( \frac{a+b}{2}\right)
+}^{\alpha }\left( fg\right) \left( b\right) \right] \right\vert \\
&\leq &\frac{1}{\Gamma \left( \alpha \right) }\left[ \dint_{a}^{\frac{a+b}{2}%
}\left\vert \dint_{a}^{t}\left( s-a\right) ^{\alpha -1}g\left( s\right)
ds\right\vert dt+\dint_{\frac{a+b}{2}}^{b}\left\vert \dint_{t}^{b}\left(
s-a\right) ^{\alpha -1}g\left( s\right) ds\right\vert \left\vert f^{\prime
}\left( t\right) \right\vert ^{q}dt\right] \\
&\leq &\frac{1}{\Gamma \left( \alpha \right) }\left( \dint_{a}^{\frac{a+b}{2}%
}\left\vert \dint_{a}^{t}\left( s-a\right) ^{\alpha -1}g\left( s\right)
ds\right\vert ^{p}dt\right) ^{1/q}\left( \dint_{a}^{\frac{a+b}{2}}\left\vert
f^{\prime }\left( t\right) \right\vert ^{q}dt\right) ^{1/q} \\
&&+\frac{1}{\Gamma \left( \alpha \right) }\left( \dint_{\frac{a+b}{2}%
}^{b}\left\vert \dint_{t}^{b}\left( s-a\right) ^{\alpha -1}g\left( s\right)
ds\right\vert ^{p}dt\right) ^{1/q}\left( \dint_{\frac{a+b}{2}}^{b}\left\vert
f^{\prime }\left( t\right) \right\vert ^{q}dt\right) ^{1/q} \\
&=&\frac{1}{\Gamma \left( \alpha \right) }\left( \dint_{a}^{\frac{a+b}{2}%
}\left\vert \dint_{a}^{t}\left( s-a\right) ^{\alpha -1}g\left( s\right)
ds\right\vert ^{p}dt\right) ^{1/q} \\
&&\times \left[ \left( \dint_{a}^{\frac{a+b}{2}}\left\vert f^{\prime }\left(
t\right) \right\vert ^{q}dt\right) ^{1/q}+\left( \dint_{\frac{a+b}{2}%
}^{b}\left\vert f^{\prime }\left( t\right) \right\vert ^{q}dt\right) ^{1/q}%
\right] \\
&\leq &\frac{\left( b-a\right) ^{\alpha +1}\left\vert \left\vert
g\right\vert \right\vert _{\left[ a,b\right] ,\infty }}{2^{\alpha +1+\frac{s%
}{q}}\left( \alpha p+1\right) \left( \alpha +2\right) ^{1/p}\left(
s+1\right) ^{1/q}\Gamma \left( \alpha +1\right) } \\
&&\times \left[ \left( \left\vert f^{\prime }\left( a\right) \right\vert
^{q}\left( 2^{s+1}-1\right) +\left\vert f^{\prime }\left( b\right)
\right\vert ^{q}\right) ^{1/q}+\left( \left\vert f^{\prime }\left( a\right)
\right\vert ^{q}+\left\vert f^{\prime }\left( b\right) \right\vert
^{q}\left( 2^{s+1}-1\right) \right) ^{1/q}\right] .
\end{eqnarray*}%
Here we use%
\begin{equation*}
\dint_{a}^{\frac{a+b}{2}}\left\vert \dint_{a}^{t}\left( s-a\right) ^{\alpha
-1}g\left( s\right) ds\right\vert ^{p}dt=\frac{\left( b-a\right) ^{\alpha
p+1}}{2^{\alpha p+1}\left( \alpha p+1\right) \alpha ^{p}}
\end{equation*}%
\begin{equation*}
\dint_{a}^{\frac{a+b}{2}}\left\vert f^{\prime }\left( t\right) \right\vert
^{q}dt\leq \frac{b-a}{2^{s+1}\left( s+1\right) }\left[ \left\vert f^{\prime
}\left( a\right) \right\vert ^{q}\left( 2^{s+1}-1\right) +\left\vert
f^{\prime }\left( b\right) \right\vert ^{q}\right]
\end{equation*}%
\begin{equation*}
\dint_{\frac{a+b}{2}}^{b}\left\vert f\prime \left( t\right) \right\vert
^{q}dt\leq \frac{b-a}{2^{s+1}\left( s+1\right) }\left[ \left\vert f^{\prime
}\left( a\right) \right\vert ^{q}+\left\vert f^{\prime }\left( b\right)
\right\vert ^{q}\left( 2^{s+1}-1\right) \right] .
\end{equation*}
\end{proof}

\begin{remark}
In Theorem 9, if we choose $s=1$ , then $\left( \ref{T81}\right) $ reduces
inequality $\left( \ref{8,0}\right) \;$of Theorem 6.
\end{remark}

\end{document}